\newtheorem{thm}{Theorem}[section]
\newtheorem{lem}[thm]{Lemma}
\newtheorem{prop}[thm]{Proposition}
\newtheorem{cor}[thm]{Corollary}
\newtheorem{procedure}[thm]{Procedure}
\theoremstyle{definition}
\theoremstyle{definition}
\newtheorem{defn}[thm]{Definition}
\newtheorem{question}[thm]{Question}
\newtheorem{observation}[thm]{Observation}
\theoremstyle{plain}
\begin{document}

\title[Symbolic Generic Initial Systems of Six Points]{The Asymptotics of Symbolic Generic Initial Systems of Six Points in $\mathbb{P}^2$}
\author{Sarah Mayes}
\address{Department of Mathematics, University of Michigan, 530 Church Street, Ann Arbor MI 48109}
\email{mayess@umich.edu}

\maketitle
\vspace*{-2em} 

\normalsize
\begin{abstract}
Consider the ideal $I \subseteq K[x,y,z]$ corresponding to six points of $\mathbb{P}^2$.  We study the limiting behaviour of the \textit{symbolic generic initial system}, $\{ \text{gin}(I^{(m)}\}_m$ of $I$ obtained by taking the reverse lexicographic generic initial ideals of the uniform fat point ideals $I^{(m)}$.  The main result of this paper is a theorem describing the \textit{limiting shape} of $\{ \text{gin}(I^{(m)}\}_m$ for each of the eleven possible configuration types of six points.
\end{abstract}
\section{Introduction}

Given a set of six points  of $\mathbb{P}^{n-1}$ with ideal $I \subseteq k[\mathbb{P}^{n-1}]$, we may consider the ideal $I^{(m)}$ generated by the polynomials  that vanish to at least order $m$ at each of the points.  Such ideals are called \textit{uniform fat point ideals} and, although they are easy to describe, they have proven difficult to understand.  There are still many open problems and unresolved conjectures related to finding the Hilbert function of $I^{(m)}$ and even the degree $\alpha(I^{(m)})$ of the smallest degree element of $I^{(m)}$ (for example, see \cite{CHT11}, \cite{GH07}, \cite{GV04}, \cite{GHM09} , and \cite{Harbourne02}). 

In this paper we will study a \textit{limiting shape} that describes the behaviour of the Hilbert functions of the set of fat point ideals $\{ I^{(m)}\}_m$ as $m$ approaches infinity.  Studying asymptotic behaviour has been an important research trend of the past twenty years; while individual algebraic objects may be complicated, the limit of a collection of such objects is often quite nice (see, for example, \cite{Huneke92}, \cite{Siu01},\cite{ELS01}, and \cite{ES09}).  Research on fat point ideals has shown that certain challenges in understanding these ideals can be overcome by studying the entire collection $\{ I^{(m)}\}_m$.  For instance, more can be said about the limit
$\epsilon(I) = \lim_{m \rightarrow \infty} \frac{\alpha(I^{(m)})}{rm}$
than the invariants $\alpha(I^{(m)})$ of each ideal (see \cite{BC10} and \cite{Harbourne02}).  

To describe the limiting behaviour of the Hilbert functions of fat point ideals, we will study the \textit{symbolic generic initial system}, $\{ \text{gin}(I^{(m)}) \}_m$, obtained by taking the reverse lexicographic generic initial ideals of fat point ideals.  When $I \subseteq K[x,y,z]$ is an ideal of points of $\mathbb{P}^2$, knowing the Hilbert function of $I^{(m)}$ is equivalent to knowing the generators of $\text{gin}(I^{(m)})$; thus, describing the limiting behaviour of the symbolic generic initial system of $I$ is equivalent to describing that of the Hilbert functions of the fat point ideals $I^{(m)}$ as $m$ gets large.

We define the \textit{limiting shape} $P$ of the symbolic generic initial system $\{ \text{gin}(I^{(m)}\}_m$ of the ideal $I$ to be the limit $\lim_{m \rightarrow \infty} \frac{1}{m} P_{\text{gin}(I^{(m)})}$, where $P_{\text{gin}(I^{(m)})}$ denotes the Newton polytope of $\text{gin}(I^{(m)})$.  When $I \subseteq K[x,y,z]$ corresponds to an arrangement of points in $\mathbb{P}^2$, each of the ideals $\text{gin}(I^{(m)})$ is generated in the variables $x$ and $y$, so $P_{\text{gin}(I^{(m)})}$, and thus $P$, can be thought of as a subset of $\mathbb{R}^2$.

The main result of this paper is the following theorem describing the limiting shape of the symbolic generic initial system of an ideal corresponding to any collection of 6 points in $\mathbb{P}^2$.  The concept of \textit{configuration type} mentioned is intuitive; for example, $\{ p_1, \dots, p_6 \}$ are of configuration type B pictured in Figure \ref{fig:AthroughF} when there is one line through three of the points but no lines through any other three points and no conics through all six points (see Definition \ref{defn:configtype}).

\begin{thm}
\label{thm:mainthm}
Let $I \subseteq K[x,y,z]$ be the ideal corresponding to a set of six points in $\mathbb{P}^2$.  Then the limiting polytope $P$ of the reverse lexicographic symbolic generic initial system $\{ \textnormal{gin}(I^{(m)} )\}_m$ is equal to the limiting shape $P$ shown in Figures \ref{fig:AthroughF} and \ref{fig:GthroughK} corresponding to the configuration type of the six points.
\end{thm}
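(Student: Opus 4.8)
The plan is to reduce the computation of the limiting shape to a problem in linear systems on blow-ups of $\mathbb{P}^2$, handled configuration type by configuration type. First I would recall the dictionary between $\mathrm{gin}(I^{(m)})$ in the reverse lexicographic order and the Hilbert function of $I^{(m)}$: since the points are in $\mathbb{P}^2$, $\mathrm{gin}(I^{(m)})$ is a Borel-fixed monomial ideal generated in $x,y$ alone, and its Newton polytope $P_{\mathrm{gin}(I^{(m)})}$ is determined by the sequence $a_t(m) := \min\{\, b : x^{t} y^{b} \in \mathrm{gin}(I^{(m)})\,\}$ for each degree $t \ge \alpha(I^{(m)})$; equivalently, $a_t(m)$ is recorded by the values of the Hilbert function of $K[x,y,z]/I^{(m)}$ in each degree. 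So describing $\lim_{m\to\infty}\tfrac1m P_{\mathrm{gin}(I^{(m)})}$ amounts to computing, for each $s \in \mathbb{R}_{\ge 0}$, the limit $\lim_{m\to\infty}\tfrac1m a_{\lfloor sm\rfloor}(m)$, i.e. the asymptotic Hilbert function of the fat point scheme.

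Next I would compute that asymptotic Hilbert function. For fixed degree $d$, $\dim (I^{(m)})_d = \dim H^0(\mathbb{P}^2, \mathcal{O}(d) \otimes \mathcal{I}_Z^m)$ where $Z = p_1 + \dots + p_6$, which after blowing up the six points becomes $\dim H^0(X, \pi^*\mathcal{O}(d) - m(E_1 + \dots + E_6))$. The asymptotics of such section spaces are governed by the volume function on $N^1(X)_{\mathbb{R}}$: writing $D_{d,m} = d H - m \sum E_i$, one has $\dim(I^{(m)})_d \sim \tfrac12 \mathrm{vol}(D_{d,m})$ as $d, m \to \infty$ proportionally, and $\mathrm{vol}$ is computed via Zariski decomposition on the surface $X$. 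The Zariski decomposition, and hence the volume, depends only on which curves have negative self-intersection and how they meet — that is, exactly on the \emph{configuration type} of the six points (for general points $X$ is a degree-$3$ del Pezzo and the relevant curves are the $27$ lines; for special configurations extra $(-1)$- or $(-2)$-curves appear from lines through $3$ points or conics through $6$). So for each of the eleven types I would: list the negative curves on $X$, compute the Zariski decomposition of $D_{s,1} = sH - \sum E_i$ as a function of $s$, extract $\mathrm{vol}(D_{s,1})$ as a piecewise-polynomial (in fact piecewise-quadratic) function of $s$, and then translate back — via the reverse-lex dictionary relating the Hilbert function to the $a_t$'s — into the boundary curve of the limiting polytope $P$. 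Finally I would check that the polytope so produced matches the picture in Figures \ref{fig:AthroughF} and \ref{fig:GthroughK} for that type.

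The main obstacle I expect is twofold. First, there is a potential gap between the \emph{asymptotic Hilbert function} (which the volume/Zariski-decomposition machinery computes cleanly) and the actual data encoded in $\mathrm{gin}(I^{(m)})$ for finite $m$: one must argue that the limit $\lim_{m}\tfrac1m P_{\mathrm{gin}(I^{(m)})}$ exists and equals the asymptotic shape, which requires some uniformity (e.g. that the Hilbert function of $I^{(m)}$ agrees with the expected one in the relevant range of degrees, or a subadditivity/convexity argument for the polytopes $P_{\mathrm{gin}(I^{(m)})}$ — this should follow from $I^{(m)} \cdot I^{(m')} \subseteq I^{(m+m')}$ and properties of gin). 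Second, and more concretely, the case analysis itself is delicate precisely for the special configurations: when several lines through triples of points coincide or when the six points lie on a conic, the surface $X$ acquires reducible negative curves and the Zariski decomposition has more chambers, so computing $\mathrm{vol}(sH - \sum E_i)$ correctly — getting the breakpoints in $s$ and the quadratic pieces right — is where the real work lies. I would organize this by first disposing of the generic case (type, say, A) to fix notation and the shape of the argument, then treating the degenerations in order of increasing speciality, at each stage only recomputing the Zariski decomposition on the locus where the negative curves change.
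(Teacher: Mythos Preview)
Your approach is correct and closely related to the paper's, but it is organized differently. Both rest on the same geometry: pull back to the blow-up $X$ of $\mathbb{P}^2$ at the six points, so that $\dim (I^{(m)})_d = h^0(X, dL - m\sum E_i)$, and then analyze this using the negative curves on $X$, which depend only on the configuration type. The difference is in how that analysis is carried out. The paper works \emph{exactly} and pointwise: for each configuration type it fixes infinitely many $m$ (e.g.\ $12\mid m$), runs the Guardo--Harbourne subtraction procedure on $F_t=tL-m\sum E_i$ for each $t$ to obtain the precise Hilbert function of $I^{(m)}$, reads off the generators of $\mathrm{gin}(I^{(m)})$ degree by degree, writes down the Newton polytope, and then lets $m\to\infty$. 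Your proposal instead passes to the asymptotic picture directly, replacing the exact $h^0$ by the volume of the $\mathbb{R}$-divisor $sL-\sum E_i$ computed via Zariski decomposition, and then differentiating the piecewise-quadratic volume to recover the boundary of $P$.

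What each buys: the paper's route is more elementary --- it never needs volume functions or Zariski decomposition as such (though the subtraction procedure is of course computing the negative part), and the existence of the limit is immediate from the nesting $\tfrac1m P_{\mathrm{gin}(I^{(m)})}\subset \tfrac1{m+1}P_{\mathrm{gin}(I^{(m+1)})}$, so computing along a subsequence suffices. Your route is more conceptual and does less case-by-case arithmetic, since one Zariski decomposition of $sL-\sum E_i$ handles all $m$ at once; but you have to supply the extra step you flagged, namely that the limiting polytope really is governed by the asymptotic Hilbert function (equivalently, that $\tfrac1{m^2}H_{I^{(m)}}(\lfloor sm\rfloor)\to \tfrac12\mathrm{vol}(sL-\sum E_i)$ and that this determines the boundary of $P$). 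That step is routine here --- the graded-system inclusion $I^{(m)}I^{(m')}\subseteq I^{(m+m')}$ gives the needed subadditivity --- but it is genuinely absent from the paper's argument, which sidesteps it by computing exactly.
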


This theorem will be proved in Section \ref{sec:proof}; Sections \ref{sec:background} and \ref{sec:HilbFn} contain background information necessary for the proof.  In Section \ref{sec:questions} we discuss how characteristics of the arrangement of an arbitrary set of points in $\mathbb{P}^2$ are, or may be, reflected in the limiting shape of the corresponding symbolic generic initial system.

	\begin{landscape}

\begin{figure}
\centering
\includegraphics[height=13cm]{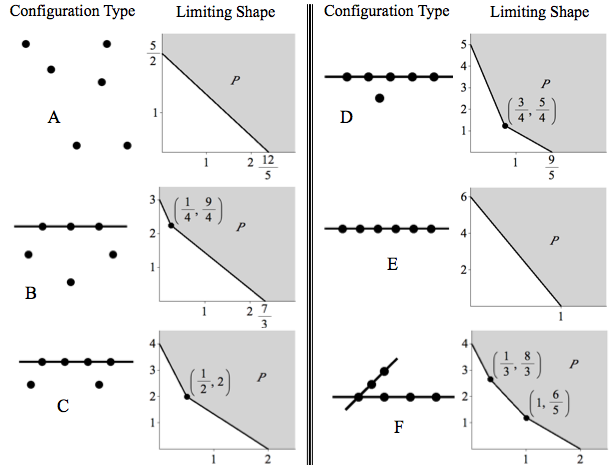}
\caption{The limiting shape $P$ of the generic initial systems $\{ \text{gin}(I^{(m)}) \}_m$ when $I$ is the ideal corresponding to points $\{p_1, \dots, p_r \}$ in configuration types A through F pictured.}
\label{fig:AthroughF}
\end{figure}

\begin{figure}
\centering
\includegraphics[height=13cm]{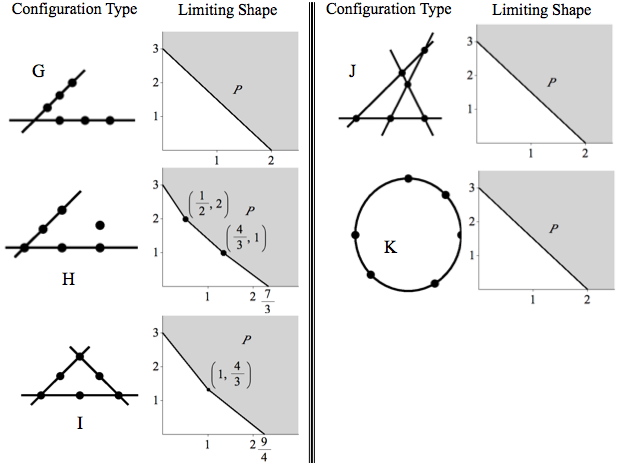}
\caption{The limiting shape $P$ of the generic initial systems $\{ \text{gin}(I^{(m)}) \}_m$ when $I$ is the ideal corresponding to points $\{p_1, \dots, p_r \}$ in configuration types G through K pictured.}
\label{fig:GthroughK}
\end{figure}

\end{landscape}
\section{Background}
\label{sec:background}

In this section we will introduce notation, definitions, and results related to fat points in $\mathbb{P}^2$, generic initial ideals, and systems of ideals. Unless stated otherwise, $R=K[x,y,z]$ is the polynomial ring in three variables over a field $K$ of characteristic 0 with the standard grading and the reverse lexicographic order $>$ with $x>y>z$.


\subsection{Fat Points in $\mathbb{P}^2$}
\label{sec:fatpoints}

\begin{defn}
Let $p_1, \dots, p_r$ be distinct points of $\mathbb{P}^2$, $I_j$ be the ideal of $K[\mathbb{P}^2] = R$ consisting of all forms vanishing at the point $p_j$, and $I = I_1 \cap \cdots \cap I_r$ be the ideal of the points $p_1, \dots, p_r$.  A \textbf{fat point subscheme} $Z = m_1p_1 + \cdots + m_rp_r$, where the $m_i$ are nonnegative integers, is the subscheme of $\mathbb{P}^2$ defined by the ideal $I_Z = I_1^{m_1} \cap \cdots \cap I_r^{m_r}$ consisting of forms that vanish at the points $p_i$ with multiplicity at least $m_i$.  When $m_i = m$ for all $i$, we say that $Z$ is \textbf{uniform}; in this case, $I_Z$ is equal to the $m^{\text{th}}$ symbolic power of $I$, $I^{(m)}$.
\end{defn}

The following lemma relates the symbolic and ordinary powers of $I$ in the case we are interested in (see, for example, Lemma 1.3 of \cite{AV03}).

\begin{lem} 
\label{lem:symbpower}
If $I$ is the ideal of distinct points in $\mathbb{P}^2$,
$$(I^m)^{\textnormal{sat}} = I^{(m)},$$
where $J^{\textnormal{sat}} = \bigcup_{k \geq 0}(J:\mathfrak{m}^k)$ denotes the saturation of $J$.
\end{lem}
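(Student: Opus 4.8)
The plan is to compute a primary decomposition of $I^m$ and compare it with the evident one for $I^{(m)}$. Write $I = I_1 \cap \cdots \cap I_r$ as in the preceding definition, so each $I_j$ is a height-two homogeneous prime of $R$ (the ideal of $p_j$, generated by two linear forms), $R/I_j \cong K[t]$, and $I^{(m)} = I_1^{m} \cap \cdots \cap I_r^{m}$. Since $I_j$ is generated by a regular sequence with $R/I_j$ a domain, every power $I_j^{m}$ is $I_j$-primary; hence $I^{(m)} = \bigcap_j I_j^m$ is a primary decomposition whose associated primes are exactly $I_1,\dots,I_r$. In particular $\mathfrak{m} \notin \operatorname{Ass}(R/I^{(m)})$, so $I^{(m)}$ is already saturated, and it suffices to prove $(I^m)^{\mathrm{sat}} = \bigcap_j I_j^m$.

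Next I would pin down the primary decomposition of $I^m$. Since $\sqrt{I^m} = \sqrt{I} = I$, the minimal primes of $I^m$ are $I_1,\dots,I_r$; as the associated primes of a homogeneous ideal are homogeneous and contain $I$, and the only homogeneous prime of $R=K[x,y,z]$ properly containing a height-two prime is $\mathfrak{m}=(x,y,z)$, we get $\operatorname{Ass}(R/I^m) \subseteq \{I_1,\dots,I_r,\mathfrak{m}\}$. Thus $I^m = Q_1 \cap \cdots \cap Q_r \cap Q$, where $Q_j$ is $I_j$-primary and $Q$ is either omitted or $\mathfrak{m}$-primary. To identify $Q_j$, localize at $I_j$: because the points are distinct, for $i \neq j$ the height-two prime $I_i$ is not contained in $I_j$, so $I_i R_{I_j} = R_{I_j}$, hence $I R_{I_j} = I_j R_{I_j}$ and $I^m R_{I_j} = I_j^m R_{I_j}$. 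Since $I_j$ is a minimal prime of $I^m$, its primary component is recovered by contraction: $Q_j = I^m R_{I_j} \cap R = I_j^m R_{I_j} \cap R = I_j^m$, the last equality holding because $I_j^m$ is $I_j$-primary. Therefore $\bigcap_j Q_j = \bigcap_j I_j^m = I^{(m)}$ and $I^m = I^{(m)} \cap Q$.

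Finally, passing to the saturation removes exactly the component $Q$. For $k \gg 0$ one has $\mathfrak{m}^k \subseteq Q$ (when $Q$ is present), so $(Q : \mathfrak{m}^k) = R$; and for each $j$, $(Q_j : \mathfrak{m}^k) = Q_j$ because $\mathfrak{m} \not\subseteq I_j = \sqrt{Q_j}$ (choosing $f \in \mathfrak{m}\setminus I_j$, the relation $g f^{k} \in Q_j$ forces $g \in Q_j$ by primariness). Since $(I^m : \mathfrak{m}^k)$ is the intersection of the $(Q_j : \mathfrak{m}^k)$ with $(Q:\mathfrak{m}^k)$, it equals $\bigcap_j Q_j = I^{(m)}$ for all $k \gg 0$, and hence $(I^m)^{\mathrm{sat}} = \bigcup_k (I^m : \mathfrak{m}^k) = I^{(m)}$.

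The only real subtlety is the containment $\operatorname{Ass}(R/I^m)\subseteq\{I_1,\dots,I_r,\mathfrak{m}\}$ together with the claim that localization at $I_j$ both kills the other components and recovers $Q_j$; this is exactly where distinctness of the points (incomparability of the $I_j$) and $\dim \mathbb{P}^2 = 2$ are used. Alternatively, the argument can be phrased sheaf-theoretically: the ideal sheaves $\widetilde{I^m}$ and $\widetilde{I^{(m)}}$ agree, since locally at each $p_j$ the ideal $I$ is a complete intersection so $I^m$ is primary, while away from the points both sheaves are $\mathcal{O}_{\mathbb{P}^2}$; as $(I^m)^{\mathrm{sat}}$ and $I^{(m)}$ are both the saturated ideal $\Gamma_*(\mathbb{P}^2,\widetilde{I^m})$, they coincide.
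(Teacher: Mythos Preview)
Your proof is correct. The paper itself does not provide a proof of this lemma: it simply cites an external reference (Lemma~1.3 of \cite{AV03}) and moves on. So there is no ``paper's own proof'' to compare against in any substantive sense.

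What you have written is the standard primary-decomposition argument, and all the steps are sound. The two points that carry the weight are exactly the ones you flagged: (i) in $K[x,y,z]$ every homogeneous prime strictly containing a height-two prime must be $\mathfrak{m}$, so $\operatorname{Ass}(R/I^m)\subseteq\{I_1,\dots,I_r,\mathfrak{m}\}$; and (ii) localizing at $I_j$ inverts a unit from each $I_i$ with $i\neq j$, so the $I_j$-primary component of $I^m$ is $I_j^m$. Your alternative sheaf-theoretic paragraph is also fine and is closer in spirit to how one often sees this stated in the fat-points literature. Either route is acceptable here; the paper is content to outsource the argument entirely.
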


The precise definition of a \textit{configuration type} mentioned in the statement of Theorem \ref{thm:mainthm} is as follows.

\begin{defn}[{\cite{GH07}}]
\label{defn:configtype}
Two sets of points $\{ p_1, \dots, p_r \}$ and $\{ p'_1, \dots, p'_r \}$ of $\mathbb{P}^2$ have the same \textbf{configuration type} if for all sequences of positive integers $m_1, \dots, m_r$ the ideals of the fat point subschemes $Z = m_1p_1+\cdots + m_rp_r$ and $Z' = m_1p'_1+\cdots+m_rp'_r$ have the same Hilbert function, possibly after reordering.
\end{defn}

\begin{prop}[{\cite{GH07}}]
\label{prop:AllConfigsCovered}
The configuration types for six distinct points in $\mathbb{P}^2$ are exactly the configurations A through K shown in Figures \ref{fig:AthroughF} and \ref{fig:GthroughK}.
\end{prop}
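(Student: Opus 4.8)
The plan is to translate the Hilbert-function notion of \emph{configuration type} (Definition \ref{defn:configtype}) into a purely combinatorial-geometric notion — the \emph{incidence structure} of the six points — and then to enumerate the possibilities for that structure. By the incidence structure of $\{p_1,\dots,p_6\}$ I mean the data recording, for each subset $S \subseteq \{p_1,\dots,p_6\}$, whether the points of $S$ are collinear, together with one extra bit recording whether all six points lie on an irreducible conic. The statement then reduces to two assertions: (1) two six-point sets with the same incidence structure have the same configuration type, and conversely the type determines the incidence structure; and (2) there are exactly eleven realizable incidence structures, namely those depicted in Figures \ref{fig:AthroughF} and \ref{fig:GthroughK}.

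For the substantive half of (1) I would show that the Hilbert function of an arbitrary fat point subscheme $Z = \sum_i m_i p_i$ is a universal function of $t$, the multiplicities $m_i$, and the incidence structure alone. The tool is residuation: for a line $\ell$ carrying a subset $S$ of the points, with residual scheme $Z' = \operatorname{Res}_\ell(Z)$, one has the exact sequence
\[
0 \to (I_{Z'})_{t-1} \to (I_Z)_t \to (I_{Z\cap \ell})_t \to 0,
\]
in which $Z\cap \ell$ is a divisor on $\ell \cong \mathbb{P}^1$ whose Hilbert function depends only on $t$, the multiplicities, and the number of points of $Z$ on $\ell$. Subtracting off the lines carrying three or more of the points (and, in the case where no three are collinear, the unique conic through all six) and inducting on $\sum_i m_i$ reduces $\dim(I_Z)_t$ to these combinatorial $\mathbb{P}^1$-data. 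The point specific to six points is that the only curves able to impose fewer than the expected number of conditions are these lines and that single conic: any curve of degree $\geq 3$ through the points moves in a family of the expected dimension, so the incidence structure carries everything the Hilbert function can detect. The converse within (1) is easier: each collinearity and the conic membership force a Bézout-type jump in the Hilbert function of a suitably chosen fat point subscheme supported on the six points, and so are read off from the type.

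For assertion (2) I would run a case analysis organized by $c$, the maximal number of collinear points, with $c \in \{2,3,4,5,6\}$. The cases $c=6$ and $c=5$ give one structure each. The case $c=4$ (four points on a line $\ell$, two off it) splits into two according to whether the line through the two off-points meets $\ell$ in one of the four points. The case $c=2$ (no three collinear) splits into two according to whether the six points lie on an irreducible conic. The rich case is $c=3$, classified by the number $t$ of lines containing exactly three of the points, subject to the constraint that two such lines meet in at most one point: keeping track of realizability one finds $t=1$ (type B), $t=2$ split into a concurrent and a disjoint subcase, $t=3$ (the triangle of triple-lines), and $t=4$. This gives $1+1+2+2+5 = 11$ structures, which I would match one-to-one with $A$ through $K$.

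The main obstacle is the interface between (1) and (2): beyond listing the combinatorially conceivable incidence structures, one must verify both that each is geometrically \emph{realizable} by six distinct points of $\mathbb{P}^2$ and that distinct structures give genuinely distinct configuration types, i.e.\ exhibit for each pair a multiplicity vector $(m_1,\dots,m_6)$ separating their Hilbert functions. The delicate subcase analysis for $c=3$ is where realizability is most constrained — the pairwise single-intersection condition on triple-lines already forbids, for instance, three triple-lines through a common point among only six points — and where establishing Hilbert-distinctness relies on the explicit residuation computations set up in the proof of (1).
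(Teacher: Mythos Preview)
The paper does not prove this proposition: it is quoted from \cite{GH07} and used as a black box. So there is no in-paper argument to compare against; what you have written is a proof outline where the paper offers only a citation.

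That said, your strategy is close in spirit to what \cite{GH07} actually does, but the machinery differs. Guardo and Harbourne work on the blow-up $X$ of $\mathbb{P}^2$ at the six points and show (this is the content of Lemma~\ref{lem:NegElements} above) that the Hilbert function of any fat point subscheme is determined by the set $\text{neg}(X)$, whose elements are precisely the classes of lines through three or more of the points and (when present) the conic through all six. Enumerating the possible $\text{neg}(X)$'s is then the same combinatorial case analysis you describe for your ``incidence structure''. So your two steps line up exactly with theirs; you have simply replaced their blow-up bookkeeping with a residuation argument.

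The residuation route is viable, but the passage you flag as the ``substantive half of (1)'' is where the work lies, and your sketch underplays it. The exact sequence controls $\dim(I_Z)_t$ only once you know the restriction map is surjective, and this is precisely what can fail; showing that for six points the only obstructions come from the lines through three or more points and the conic through six is the real content, equivalent to the $r\leq 8$ vanishing behind Lemma~\ref{lem:h0ofNEFF}. Your assertion that ``any curve of degree $\geq 3$ through the points moves in a family of the expected dimension'' is the right heuristic but is not a proof, and in fact one also needs to know that the conics through five points (the $(-1)$-classes in $\mathcal{Q}$ of Lemma~\ref{lem:NegElements}) do not separate configuration types. The enumeration in your step (2) is essentially correct, with the $c=3$ subcases $t=1,2,2,3,4$ matching the five types in the figures; the realizability and Hilbert-distinctness checks you mention are routine but do need to be done.
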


\subsection{Generic Initial Ideals}

An element $g = (g_{ij}) \in \text{GL}_n(K)$ acts on $R = K[x_1, \dots, x_n]$ and sends any homogeneous element $f(x_1, \dots, x_n)$ to the homogeneous element 
$$f(g(x_1), \dots, g(x_n))$$ 
where $g(x_i) = \sum_{j=1}^n g_{ij}x_j$.  If $g(I)=I$ for every upper triangular matrix $g$ then we say that $I$ is \textit{Borel-fixed}.  Borel-fixed ideals are \textit{strongly stable} when $K$ is of characteristic 0; that is, for every monomial $m$ in the ideal such that $x_i$ divides $m$, the monomials $\frac{x_jm}{x_i}$ are also in the ideal for all $j<i$.  This property makes such ideals particularly nice to work with.

To any homogeneous ideal $I$ of $R$ we can associate a Borel-fixed monomial ideal $\text{gin}_{>}(I)$ which can be thought of as a coordinate-independent version of the initial ideal.  Its existence is guaranteed by Galligo's theorem (also see \cite[Theorem 1.27]{Green98}).

\begin{thm}[{\cite{Galligo74} and \cite{BS87b}}]
\label{thm:galligo}
For any multiplicative monomial order $>$ on $R$ and any homogeneous ideal $I\subset R$, there exists a Zariski open subset $U \subset \textnormal{GL}_n(K)$ such that $\textnormal{In}_{>}(g(I))$ is constant and Borel-fixed for all $g \in U$.  
\end{thm}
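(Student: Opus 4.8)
The plan is to follow the classical two-step route: first produce a nonempty Zariski-open $U \subseteq \mathrm{GL}_n(K)$ on which $\mathrm{In}_>(g(I))$ equals a single fixed monomial ideal $J$, and then show that this $J$ is Borel-fixed. After reindexing the variables we may assume $x_1 > x_2 > \cdots > x_n$.

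For the first step I would argue one degree at a time. Fix $d$, set $N = \dim_K R_d$ and $k = \dim_K I_d$ (independent of $g$, since $g$ acts invertibly and linearly), and list the degree-$d$ monomials in decreasing order $M_1 > \cdots > M_N$. Writing $g(I_d) \subseteq R_d$ as the row space of a $k \times N$ matrix $A(g)$ whose entries are polynomials in the $g_{ij}$ and $(\det g)^{-1}$, the monomial space $\mathrm{In}_>(g(I))_d$ is spanned precisely by the monomials indexed by the pivot columns of $A(g)$. The key point is that the pivot-column pattern is the same for all $g$ in a dense Zariski-open set $U_d$, this set being cut out by the nonvanishing of a nested chain of minors of $A(g)$. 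Because $\mathrm{In}_>(g(I))$ is an ideal for every $g$ and the dense open sets $U_d$ and $U_{d+1}$ meet, the generic monomial space $J_d$ in degree $d$ and the one in degree $d+1$ satisfy $R_1 J_d \subseteq J_{d+1}$, so the $J_d$ assemble into a genuine monomial ideal $J$. By Noetherianity $J$ is generated in degrees $\le d_0$ for some $d_0$, so $U := \bigcap_{d \le d_0} U_d$ is nonempty and Zariski-open; for $g \in U$ the ideal $\mathrm{In}_>(g(I))$ agrees with $J$ in degrees $\le d_0$, hence contains $J$, and a Hilbert-function count ($\dim_K \mathrm{In}_>(g(I))_d = \dim_K I_d = \dim_K J_d$) upgrades this to $\mathrm{In}_>(g(I)) = J$.

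For Borel-fixedness I would use a Gröbner degeneration. Only degrees $\le d_0$ matter, so choose integers $a_1 < a_2 < \cdots < a_n$ and set $\lambda(t) = \mathrm{diag}(t^{a_1}, \dots, t^{a_n})$, arranged (this is standard, using $x_1 > \cdots > x_n$) so that $\lim_{t \to 0}\lambda(t)\cdot K = \mathrm{In}_>(K)$ as a flat limit for every ideal $K$ generated in degrees $\le d_0$. Fix $g \in U$, put $I' = g(I)$, so $J = \lim_{t\to 0}\lambda(t)\cdot I'$. It suffices to prove $b(J) = J$ for $b$ an elementary upper-triangular matrix $b = \mathrm{id} + cE_{ij}$ with $i < j$, since these together with the diagonal torus generate the Borel subgroup and $J$, being monomial, is torus-fixed. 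For such $b$ one computes $\lambda(t)^{-1} b \lambda(t) = \mathrm{id} + c\, t^{\,a_j - a_i} E_{ij}$, which tends to $\mathrm{id}$ as $t \to 0$ because $a_j - a_i > 0$; therefore
\[
b(J) \;=\; \lim_{t\to 0} b\,\lambda(t)\, I' \;=\; \lim_{t\to 0}\lambda(t)\bigl(\lambda(t)^{-1}b\lambda(t)\bigr)I' \;=\; \lim_{t\to 0}\lambda(t)\, I' \;=\; J .
\]
The first equality uses continuity of the $\mathrm{GL}_n$-action on the relevant Hilbert scheme (or Grassmannian); the third is the step needing the most care and is made rigorous by regarding $t \mapsto \lambda(t)\bigl(\lambda(t)^{-1}b\lambda(t)\bigr)I'_d$ as a morphism $\mathbb{A}^1 \to \mathrm{Gr}(k, R_d)$, using properness of the Grassmannian, and checking it takes the same value at $t=0$ as $t \mapsto \lambda(t) I'_d$ (for instance by writing down bases with entries polynomial in $t$).

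The step I expect to be the main obstacle is the claim that $U_d$ is genuinely Zariski-open rather than merely constructible: the locus of $g$ for which a \emph{prescribed} pivot pattern occurs is, in general, only locally closed. The way around this is the observation that if, for generic $g$, some column of $A(g)$ lies in the span of the preceding columns, then it does so for \emph{every} $g$ (that locus is both closed and dense), so the generic pivot pattern is in fact cut out by nonvanishing of minors alone. The remaining delicate point, already flagged, is the continuity of the flat limit in the Borel-fixedness argument.
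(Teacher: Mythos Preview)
The paper does not give its own proof of this theorem: it is quoted as a classical result with citations to Galligo and Bayer--Stillman, and the reader is pointed to \cite{Green98} for an exposition. So there is no ``paper's proof'' to compare against.

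That said, your proposal is essentially the standard proof one finds in the literature (e.g.\ Eisenbud's \emph{Commutative Algebra}, Theorems~15.18--15.23, or the treatment in Green's notes), and it is correct in outline. The degree-by-degree Grassmannian/pivot argument for constancy on a Zariski-open set is right, and you have correctly identified the one genuinely subtle point, namely why the generic pivot pattern is cut out by nonvanishing of minors alone rather than being merely locally closed; your explanation is the right one. One small inaccuracy: the entries of $A(g)$ are already polynomial in the $g_{ij}$ (apply $g$ to a fixed basis of $I_d$), so no $(\det g)^{-1}$ is needed. For the Borel-fixedness, the one-parameter subgroup degeneration you describe is exactly the classical argument, and your sign convention $a_1<\cdots<a_n$ is consistent with $x_1>\cdots>x_n$ and the limit $t\to 0$. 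The continuity step you flag can indeed be made rigorous on the Grassmannian as you indicate; an alternative bookkeeping-free way to finish is to note that, since $\lambda(t)^{-1}b\lambda(t)\to\mathrm{id}$, for all $t$ near~$0$ one has $(\lambda(t)^{-1}b\lambda(t))\,g\in U$, hence $(\lambda(t)^{-1}b\lambda(t))\,I'$ has the same initial ideal $J$ as $I'$, and then $b(J)=\lim_{t\to 0}\lambda(t)\bigl((\lambda(t)^{-1}b\lambda(t))I'\bigr)=J$ follows directly.
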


\begin{defn}
The \textbf{generic initial ideal of $I$}, denoted $\text{gin}_{>}(I)$, is defined to be $\text{In}_{>}(g(I))$ where $g \in U$ is as in Galligo's theorem.
\end{defn}

The \textit{reverse lexicographic order} $>$ is a total ordering on the monomials of $R$ defined by: 
\begin{enumerate}
\item if $|I| =|J|$ then $x^I > x^J$ if there is a $k$ such that $i_m = j_m$ for all $m>k$ and $i_k < j_k$; and
\item if $|I| > |J|$ then $x^I >x^J$.
\end{enumerate}
For example, $x_1^2 >x_1x_2 > x_2^2>x_1x_3>x_2x_3>x_3^2$.   From this point on, $\text{gin}(I) = \text{gin}_{>}(I)$ will denote the generic initial ideal with respect to the reverse lexicographic order.

Recall that the Hilbert function $H_I(t)$ of $I$ is defined by $H_I(t) = \text{dim}(I_t)$.  The following result is a consequence of the fact that Hilbert functions are invariant under making changes of coordinates and taking initial ideals (\cite{Green98}).

\begin{prop}
\label{prop:commonproperties}
For any homogeneous ideal $I$ in $R$, the Hilbert functions of $I$ and $\textnormal{gin}(I)$ are equal.
\end{prop}

We now describe the structure of the ideals $\text{gin}(I^{(m)})$ where $I$ is an ideal corresponding to points in $\mathbb{P}^2$.  The proof of this result is contained in \cite{Mayes12c} and follows from results of Bayer and Stillman (\cite{BS87}) and of Herzog and Srinivasan (\cite{HS98})

\begin{prop}[{Corollary 12.9 of \cite{Mayes12c}}]
\label{prop:formofgin}
Suppose $I \subseteq K[x,y,z]$ is the ideal of distinct points in $\mathbb{P}^2$.  Then the minimal generators of $\textnormal{gin}(I^{(m)})$ are 
$$\{ x^{\alpha(m)}, x^{\alpha(m)-1}y^{\lambda_{\alpha(m)-1}}, \dots, xy^{\lambda_1(m)}, y^{\lambda_0(m)} \}$$
for $\lambda_0(m), \dots, \lambda_{\alpha(m)-1}$ such that $\lambda_0(m) > \lambda_1(m) > \cdots > \lambda_{\alpha(m)-1}(m) \geq 1.$
\end{prop}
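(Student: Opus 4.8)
My approach would combine three ingredients: the saturatedness of $I^{(m)}$, the one-dimensionality of its coordinate ring, and the strong stability of reverse lexicographic generic initial ideals.

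First I would note that by Lemma~\ref{lem:symbpower} we have $I^{(m)} = (I^m)^{\mathrm{sat}}$, so $I^{(m)}$ is saturated with respect to the irrelevant ideal $\mathfrak{m} = (x,y,z)$. The crucial input of Bayer--Stillman \cite{BS87} is that, for the reverse lexicographic order with $z$ the smallest variable, an ideal $J$ is saturated precisely when $z$ is a nonzerodivisor on $R/\mathrm{gin}(J)$; since $\mathrm{gin}(J)$ is a monomial ideal, this holds if and only if no minimal generator of $\mathrm{gin}(J)$ is divisible by $z$. Applying this to $J = I^{(m)}$ shows that $\mathrm{gin}(I^{(m)}) = J_m\cdot R$ for a monomial ideal $J_m \subseteq K[x,y]$, so every minimal generator already involves only $x$ and $y$.

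Next, since $I^{(m)} = I_1^m \cap \cdots \cap I_r^m$ cuts out a zero-dimensional subscheme of $\mathbb{P}^2$, the ring $R/I^{(m)}$ has Krull dimension $1$; by Proposition~\ref{prop:commonproperties} the Hilbert function, hence the dimension, is unchanged by passing to $\mathrm{gin}$, so $R/\mathrm{gin}(I^{(m)})$ also has dimension $1$, forcing $J_m$ to be $(x,y)$-primary in $K[x,y]$. In particular $x^N \in J_m$ and $y^N \in J_m$ for some $N$, so the set $\{\, b : x^i y^b \in J_m \,\}$ is nonempty for every $i \ge 0$. Let $\alpha(m)$ be the least integer with $x^{\alpha(m)} \in J_m$, and for $0 \le i \le \alpha(m)-1$ let $\lambda_i(m) = \min\{\, b : x^i y^b \in J_m \,\}$; this minimum is finite by the previous remark and is $\geq 1$ because $x^i \notin J_m$ for $i < \alpha(m)$. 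Now Galligo's theorem (Theorem~\ref{thm:galligo}) says $\mathrm{gin}(I^{(m)})$, hence $J_m$, is Borel-fixed, so strongly stable since $\mathrm{char}\,K = 0$. Applying strong stability to $x^i y^{\lambda_i(m)}$ — replacing a factor of $y$ by $x$ — gives $x^{i+1} y^{\lambda_i(m)-1} \in J_m$ whenever $\lambda_i(m) \ge 1$, whence $\lambda_{i+1}(m) \le \lambda_i(m)-1$, and therefore $\lambda_0(m) > \lambda_1(m) > \cdots > \lambda_{\alpha(m)-1}(m) \ge 1$. Finally I would check that $x^{\alpha(m)}$ together with the $x^i y^{\lambda_i(m)}$ are exactly the minimal monomial generators of $J_m$: they form an antichain under divisibility by the strict monotonicity of the $\lambda_i$; each $x^i y^{\lambda_i(m)-1}$ and each $x^{i-1}y^{\lambda_i(m)}$ lies outside $J_m$ by minimality (using $\lambda_{i-1}(m) > \lambda_i(m)$ for the latter); and every $x^a y^b \in J_m$ is divisible by $x^{\alpha(m)}$ when $a \ge \alpha(m)$ and by $x^a y^{\lambda_a(m)}$ when $a < \alpha(m)$, since then $b \ge \lambda_a(m)$.

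The only genuinely delicate point is the Bayer--Stillman step, which relies on the special interaction between the reverse lexicographic order and saturation (equivalently, on the depth formula for reverse lexicographic gins that detects exactly which small variables occur among the generators); the Herzog--Srinivasan reference \cite{HS98} presumably repackages this depth/generator information in the form convenient here. Everything downstream — the passage to a two-variable ideal, the primary decomposition consequence of dimension one, and the staircase description — is the standard combinatorics of strongly stable ideals and should be routine.
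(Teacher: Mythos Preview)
Your argument is correct and follows precisely the route the paper indicates: the paper does not prove this proposition in-text but cites \cite{Mayes12c}, noting there that the result follows from Bayer--Stillman \cite{BS87} (your saturation/revlex step eliminating $z$) together with Herzog--Srinivasan \cite{HS98} (the strongly stable staircase structure you derive directly). Your self-contained treatment of the two-variable strongly stable combinatorics is exactly the expected unpacking of those citations, so there is nothing substantively different to compare.
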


Since Borel-fixed ideals generated in two variables are determined by their Hilbert functions (see, for example,  Lemma 3.7 of \cite{Mayes12b}), we have the following corollary of Propositions
\ref{prop:commonproperties} and \ref{prop:formofgin}.

\begin{cor}
\label{cor:GISConfigType}
If $I$ and $I'$ are ideals corresponding to two point arrangements of the same configuration type, $\textnormal{gin}(I^{(m)}) = \textnormal{gin}(I'^{(m)})$ for all $m$.
\end{cor}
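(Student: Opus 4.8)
The plan is to chain together the two results cited in the statement. First I would invoke Proposition \ref{prop:commonproperties}: since $I$ and $I'$ are ideals of point sets of the same configuration type, Definition \ref{defn:configtype} tells us that the fat point subschemes $I^{(m)} = m p_1 + \cdots + m p_r$ and $I'^{(m)} = m p'_1 + \cdots + m p'_r$ (the uniform case, taking $m_1 = \cdots = m_r = m$) have the same Hilbert function for every $m$. By Proposition \ref{prop:commonproperties}, it follows that $\mathrm{gin}(I^{(m)})$ and $\mathrm{gin}(I'^{(m)})$ have the same Hilbert function for every $m$.

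Next I would use Proposition \ref{prop:formofgin}, which says that each of $\mathrm{gin}(I^{(m)})$ and $\mathrm{gin}(I'^{(m)})$ is a Borel-fixed monomial ideal in $K[x,y,z]$ minimally generated by monomials in $x$ and $y$ alone. So both are Borel-fixed ideals generated in two variables. Then I would appeal to the cited fact (Lemma 3.7 of \cite{Mayes12b}) that a Borel-fixed ideal generated in two variables is uniquely determined by its Hilbert function. Combining this with the equality of Hilbert functions from the previous paragraph, we conclude $\mathrm{gin}(I^{(m)}) = \mathrm{gin}(I'^{(m)})$ for all $m$, which is exactly the claim.

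There is really no hard technical obstacle here: the corollary is a formal consequence of results already established, and the proof is a two- or three-line deduction. The one point that merits a word of care is making sure the hypothesis of the two-variable structure result is actually met — i.e., that one genuinely has ideals \emph{generated in two variables} and not merely ideals that happen to share a Hilbert function — but this is precisely the content of Proposition \ref{prop:formofgin} and requires no extra argument. I would simply state the three implications in sequence and conclude.
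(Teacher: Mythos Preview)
Your argument is correct and follows exactly the route the paper indicates: combine Definition~\ref{defn:configtype} with Proposition~\ref{prop:commonproperties} to match Hilbert functions, then use Proposition~\ref{prop:formofgin} together with the cited two-variable uniqueness lemma to conclude equality of the generic initial ideals. There is nothing to add.
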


Actually finding the Hilbert functions of fat point ideals is not easy and is a significant area of research. (for example, see \cite{CHT11}, \cite{GH07}, \cite{GV04}, \cite{GHM09} , and \cite{Harbourne02})  When $I$ is the ideal of less than 9 points, however, techniques exist for computing these Hilbert functions.  In Section \ref{sec:HilbFn} we will outline the method used in this paper, following \cite{GH07}.  Other techniques, such as those in \cite{CHT11}, can also be used for some of the point arrangements A through K.


\subsection{Graded Systems}

In this subsection we introduce the limiting shape of a graded system of monomial ideals.

\begin{defn}[\cite{ELS01}]
A \textbf{graded system of ideals} is a collection of ideals $J_{\bullet} =\{J_i\}_{i=1}^{\infty}$ such that 
$$J_i \cdot J_j \subseteq J_{i+j} \hspace{0.3in} \text{ for all } i, j \geq 1.$$
\end{defn}

\begin{defn}
The \textbf{generic initial system} of a homogeneous ideal $I$ is the collection of ideals $J_{\bullet}$ such that $J_i = \text{gin}(I^i)$.  The \textbf{symbolic generic initial system} of a homogeneous ideal $I$ is the collection $J_{\bullet}$ such that $J_i = \text{gin}(I^{(i)})$.
\end{defn}

The following lemma justifies calling these collections `systems'; see Lemma 2.5 of \cite{Mayes12a} and Lemma 2.2 of \cite{Mayes12c} for proofs.

\begin{lem}
Generic initial systems and symbolic generic initial systems are graded system of ideals.
\end{lem}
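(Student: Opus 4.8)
The plan is to reduce the statement to two elementary facts about initial ideals together with Galligo's theorem (Theorem \ref{thm:galligo}). The first fact is that for the reverse lexicographic order (indeed any multiplicative monomial order) and any homogeneous ideals $A, B \subseteq R$ one has $\textnormal{In}_{>}(A)\cdot\textnormal{In}_{>}(B) \subseteq \textnormal{In}_{>}(AB)$, and, more trivially, $A \subseteq B$ implies $\textnormal{In}_{>}(A) \subseteq \textnormal{In}_{>}(B)$. The first inclusion holds because if $f \in A$ and $h \in B$ have leading terms $\textnormal{in}(f)$ and $\textnormal{in}(h)$, then multiplicativity of $>$ gives $\textnormal{in}(fh) = \textnormal{in}(f)\,\textnormal{in}(h)$ while $fh \in AB$, so every generator of $\textnormal{In}_{>}(A)\cdot\textnormal{In}_{>}(B)$ lies in $\textnormal{In}_{>}(AB)$; the second is immediate from the description of $\textnormal{In}_{>}$ as the span of leading terms. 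The second fact is that a linear change of coordinates $g \in \textnormal{GL}_n(K)$ acts on $R$ by a graded $K$-algebra automorphism, so $g(A\cap B) = g(A)\cap g(B)$, $g(AB) = g(A)g(B)$, and hence $g(I^k) = g(I)^k$; combined with $I^{(m)} = I_1^m \cap \cdots \cap I_r^m$, this also shows $I^{(i)}\cdot I^{(j)} \subseteq I^{(i+j)}$, since the product on the left is contained in $I_\ell^i I_\ell^j = I_\ell^{i+j}$ for each $\ell$ and hence in the intersection (for ordinary powers one has the equality $I^iI^j = I^{i+j}$).

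First I would fix $i, j \geq 1$ and apply Galligo's theorem to each of the ideals $I^i$, $I^j$, $I^{i+j}$ (respectively $I^{(i)}$, $I^{(j)}$, $I^{(i+j)}$), obtaining three nonempty Zariski-open subsets of $\textnormal{GL}_n(K)$ on each of which the relevant initial ideal is constant, Borel-fixed, and equal to the corresponding generic initial ideal. Since $\textnormal{GL}_n(K)$ is irreducible, a finite intersection of nonempty Zariski-open subsets is again nonempty, so I may pick a single $g$ lying in all three, giving $\textnormal{In}_{>}(g(A)) = \textnormal{gin}(A)$ simultaneously for $A \in \{I^i, I^j, I^{i+j}\}$ (respectively for the symbolic analogues).

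Then the statement follows by chaining inclusions. For the generic initial system,
\[
\begin{aligned}
\textnormal{gin}(I^i)\cdot \textnormal{gin}(I^j)
&= \textnormal{In}_{>}(g(I^i))\cdot\textnormal{In}_{>}(g(I^j))
\subseteq \textnormal{In}_{>}\bigl(g(I^i)\,g(I^j)\bigr)\\
&= \textnormal{In}_{>}\bigl(g(I)^{i+j}\bigr)
= \textnormal{In}_{>}\bigl(g(I^{i+j})\bigr)
= \textnormal{gin}(I^{i+j}).
\end{aligned}
\]
For the symbolic generic initial system the same chain applies, with the middle equality replaced by $g(I^{(i)})g(I^{(j)}) = g\bigl(I^{(i)}I^{(j)}\bigr) \subseteq g\bigl(I^{(i+j)}\bigr)$ and an application of the monotonicity of $\textnormal{In}_{>}$; this yields $\textnormal{gin}(I^{(i)})\cdot\textnormal{gin}(I^{(j)}) \subseteq \textnormal{gin}(I^{(i+j)})$. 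In either case this is precisely the defining condition of a graded system of ideals.

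I do not expect a serious obstacle here. The only points needing care are the verification that the reverse lexicographic order is multiplicative, so that $\textnormal{in}(fh) = \textnormal{in}(f)\,\textnormal{in}(h)$ — this is standard — and the passage to a common $g$ computing all three generic initial ideals at once, which is exactly where irreducibility of $\textnormal{GL}_n(K)$ enters. The inclusion $I^{(i)}I^{(j)} \subseteq I^{(i+j)}$ is the sole place the intersection definition of symbolic powers is used, and it is elementary.
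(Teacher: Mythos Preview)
Your argument is correct and is the standard one: pick a single $g\in\textnormal{GL}_n(K)$ in the intersection of the three Galligo-open sets (using irreducibility), then chain $\textnormal{In}_>(A)\cdot\textnormal{In}_>(B)\subseteq\textnormal{In}_>(AB)$ and monotonicity of $\textnormal{In}_>$ with $g(I^i)g(I^j)=g(I^{i+j})$ (respectively $I^{(i)}I^{(j)}\subseteq I^{(i+j)}$). The paper does not give a proof in-text but defers to Lemma~2.5 of \cite{Mayes12a} and Lemma~2.2 of \cite{Mayes12c}; those arguments are exactly the one you wrote, so your proposal matches the intended proof.
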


Let $J$ be a monomial ideal of $R=K[x_1,\dots,x_n]$.  We may associate to $J$ a subset $\Lambda$ of $\mathbb{N}^n$ consisting of the points $\lambda$ such that $x^{\lambda} \in J$.  The \textit{Newton polytope} $P_J$ of $J$ is the convex hull of $\Lambda$ regarded as a subset of $\mathbb{R}^n$.  Scaling the polytope $P_J$ by a factor of $r$ gives another polytope that we will denote $rP_J$.

If $\mathrm{a}_{\bullet}$ is a graded system of monomial ideals in $R$, the polytopes of $\{ \frac{1}{q} P_{\mathrm{a}_q} \}_q$ are nested: $\frac{1}{c}P_{\mathrm{a}_c} \subset \frac{1}{c+1}P_{\mathrm{a}_{c+1}}$ for all $c \geq 1$.  The \textit{limiting shape $P$ of $\mathrm{a}_{\bullet}$} is the limit of the polytopes in this set:
$$P = \bigcup_{q \in \mathbb{N}^*} \frac{1}{q} P_{\mathrm{a}_q}.$$

When $I$ is the ideal of points in $\mathbb{P}^2$ $\text{gin}(I^{(m)})$ is generated in the variables $x$ and $y$ by Proposition \ref{prop:formofgin}, so we can think of each $P_{\text{gin}(I^{(m)})}$, and thus $P$, as a subset of $\mathbb{R}^2$.

\section{Technique for computing the Hilbert function}
\label{sec:HilbFn}

Here we summarize the method that is used to compute $H_{I^{(m)}}(t)$ in this paper.  It follows the work of Guardo and Harbourne in \cite{GH07}; details can be found there.

Suppose that $\pi: X \rightarrow \mathbb{P}^2$ is the blow-up of distinct points $p_1, \dots, p_r$ of $\mathbb{P}^2$. Let $E_i = \pi^{-1}(p_i)$ for $i = 1, \dots, r$ and $L$ be the total transform in $X$ of a line  not passing through any of the points $p_1, \dots, p_r$.  The classes of these divisors form a basis of $\text{Cl}(X)$; for convenience, we will write $e_i$ in place of $[E_i]$ and $e_0$ in place of $[L]$.  Further, the intersection product in $\text{Cl}(X)$ is defined by $e_i^2 = -1$ for $i=1, \dots, r$; $e_0^2 = 1$; and $e_i \cdot e_j = 0$ for all $i\neq j$.

Let $Z = m(p_1+\cdots +p_r)$ be a uniform fat point subscheme with sheaf of ideals $\mathcal{I}_Z$; set 
$${F}_d = dE_0 - m(E_1 + E_2 + \cdots +E_r)$$
and $\mathcal{F}_d = \mathcal{O}_X(F_d)$.  

The following lemma relates divisors on $X$ to the Hilbert function of $I^{(m)}$.

\begin{lem}
\label{lem:relationWithDivisors}
If $F_d = dE_0-m(E_1+\cdots +E_r)$ then $h^0(X, \mathcal{F}_d) = H_{I^{(m)}}(d)$.
\end{lem}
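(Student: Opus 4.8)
The plan is to identify $H^0(X,\mathcal{F}_d)$ with the degree-$d$ graded piece of the uniform fat point ideal $I_Z = I^{(m)}$, so that $h^0(X,\mathcal{F}_d) = \dim_K (I^{(m)})_d = H_{I^{(m)}}(d)$.

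Since $E_0$ is the total transform of a line, $\mathcal{O}_X(dE_0) = \pi^{*}\mathcal{O}_{\mathbb{P}^2}(d)$; as $\pi$ is birational with $\pi_{*}\mathcal{O}_X = \mathcal{O}_{\mathbb{P}^2}$, the projection formula gives $\pi_{*}\mathcal{O}_X(dE_0) = \mathcal{O}_{\mathbb{P}^2}(d)$ and hence $H^0(X,\mathcal{O}_X(dE_0)) = H^0(\mathbb{P}^2,\mathcal{O}_{\mathbb{P}^2}(d)) = R_d$. Because $F_d$ differs from $dE_0$ by the effective exceptional divisor $m(E_1+\cdots+E_r)$, restriction along this inclusion of sheaves realizes $H^0(X,\mathcal{F}_d)$ as the subspace of $R_d$ consisting of those forms $F$ with $\operatorname{mult}_{E_i}(\pi^{*}F)\geq m$ for every $i$. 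Equivalently, this is the statement $\pi_{*}\mathcal{O}_X(F_d) \cong \mathcal{I}_Z(d)$, where $\mathcal{I}_Z$ is the ideal sheaf of $Z = m(p_1+\cdots+p_r)$.

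The step requiring the most care is the local blow-up computation identifying orders of vanishing: for a homogeneous form $F$, the multiplicity of $\pi^{*}F$ along $E_i$ equals the multiplicity of $F$ at $p_i$. This is checked on a standard affine chart of the blow-up, where $\pi$ has the form $(u,v)\mapsto(u,uv)$ and one verifies that a germ lies in $\mathfrak{m}_{p_i}^{m}$ exactly when its pullback is divisible by $u^{m}$. Granting this, the image of $H^0(X,\mathcal{F}_d)$ in $R_d$ is precisely $\{F\in R_d : \operatorname{mult}_{p_i}(F)\geq m \text{ for all } i\} = (I_1^{m}\cap\cdots\cap I_r^{m})_d = (I_Z)_d$. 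Since $Z$ is zero-dimensional, $I_Z = I^{(m)} = (I^m)^{\mathrm{sat}}$ is saturated (Lemma \ref{lem:symbpower}), so $H^0(\mathbb{P}^2,\mathcal{I}_Z(d)) = (I_Z)_d = (I^{(m)})_d$. Combining the identifications yields $h^0(X,\mathcal{F}_d) = \dim_K (I^{(m)})_d = H_{I^{(m)}}(d)$; note that for small $d$ both sides vanish, so no positivity hypothesis on $d$ is needed.
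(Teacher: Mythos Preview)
Your argument is correct and follows the same route as the paper: the paper's proof simply asserts $\pi_{*}(\mathcal{F}_d)=\mathcal{I}_Z\otimes\mathcal{O}_{\mathbb{P}^2}(d)$ and reads off $h^0(X,\mathcal{F}_d)=h^0(\mathbb{P}^2,\mathcal{I}_Z(d))=\dim(I_Z)_d=H_{I^{(m)}}(d)$, whereas you additionally unpack why the pushforward identity holds via the projection formula and the local blow-up multiplicity computation. So your proof is essentially the paper's, with the standard details filled in.
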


\begin{proof}
Since $\pi_{*}(\mathcal{F}_d) = \mathcal{I}_Z \otimes \mathcal{O}_{\mathbb{P}^2}(d)$, 
$$H_{I^{(m)}}(d) = \text{dim}((I_Z)_d) = h^0(\mathbb{P}^2, \mathcal{I}_Z \otimes \mathcal{O}_{\mathbb{P}^2}(d)) = h^0(X, \mathcal{F}_d)$$
for all $d$. 
\end{proof}

For convenience, we will sometimes write $h^0(X, F) = h^0(X, \mathcal{O}_X(F))$.  Recall that if  $[{F}]$ not the class of an effective divisor then $h^0(X, {F}) = 0$.  On the other hand, if $F$ is effective, then we will see that we can compute $h^0(X,{F})$ by finding $h^0(X,{H})$ for some \textit{numerically effective} divisor $H$.  

\begin{defn}
A divisor $H$ is \textbf{numerically effective} if $[F] \cdot [H] \geq 0$ for every effective divisor $F$, where $[F] \cdot [H]$ denotes the intersection multiplicity.  The cone of classes of numerically effective divisors in $\text{Cl}(X)$ is denoted by NEF($X$).
\end{defn}

\begin{lem}
\label{lem:h0ofNEFF}
Suppose that $X$ is the blow-up of $\mathbb{P}^2$ at $r \leq 8$ points in general position and that $F \in \text{NEF}(X)$.  Then $F$ is effective and 
$$h^0(X, F)  = ([F]^2-[F]\cdot [K_X])/2+1$$
where $K_X = -3E_0 + E_1 + \cdots + E_r$.
\end{lem}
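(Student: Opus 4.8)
The plan is to reduce the statement to the classical Riemann--Roch theorem on the rational surface $X$, together with a vanishing result for the higher cohomology of nef divisors. Since $X$ is obtained from $\mathbb{P}^2$ by blowing up $r \le 8$ points in general position, $X$ is a (weak) del Pezzo surface when the points are in sufficiently general position, and in any case $-K_X$ is nef and big; this is the geometric input that makes the argument work. First I would recall that, because $X$ is a smooth rational surface, $h^0(X, \mathcal{O}_X) = 1$, $h^1(X, \mathcal{O}_X) = h^2(X, \mathcal{O}_X) = 0$, and hence $\chi(\mathcal{O}_X) = 1$. Riemann--Roch on a smooth projective surface then gives
$$\chi(X, F) = \frac{[F]^2 - [F]\cdot[K_X]}{2} + 1$$
for any divisor $F$, which is already the right-hand side of the claimed formula. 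So the entire content of the lemma is that $h^0(X,F) = \chi(X,F)$, i.e.\ that $h^1(X,F) = h^2(X,F) = 0$ when $F$ is nef, together with the assertion that a nef divisor on such a surface is effective.

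For the vanishing of $h^2$: by Serre duality $h^2(X,F) = h^0(X, K_X - F)$, and since $F$ is nef while $-K_X$ is nef and nonzero (for $r \le 8$ one checks $(-K_X)^2 = 9 - r \ge 1 > 0$), the class $K_X - F$ pairs negatively against $-K_X$, hence is not effective, so $h^2(X,F) = 0$. For the vanishing of $h^1$ I would invoke a Kawamata--Viehweg-type vanishing statement adapted to this setting: writing $F = K_X + (F - K_X)$ and noting that $F - K_X = F + (-K_X)$ is nef and big (a sum of a nef divisor and a nef big divisor on a surface is nef and big, provided it is not the zero class, which here it is not since it meets $-K_X$ strictly positively), Kawamata--Viehweg vanishing yields $h^1(X, F) = h^1(X, K_X + (F-K_X)) = 0$. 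Combining, $h^0(X,F) = \chi(X,F)$, which gives the displayed formula; in particular the right-hand side is $\ge h^0(X,\mathcal{O}_X) = 1 > 0$ evaluated appropriately, and more directly $h^0(X,F) = \chi(X,F) \ge 1$ shows $F$ is effective (one must check $\chi(X,F) \ge 1$, which follows since $[F]^2 \ge 0$ and $-[F]\cdot[K_X] \ge 0$ as both $F$ and $-K_X$ are nef).

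The main obstacle is handling the \emph{weak} del Pezzo case — that is, points that are in ``general position'' only in the weak sense allowed by the hypothesis (no three collinear, no six on a conic, etc.), so that $X$ may contain $(-2)$-curves and $-K_X$ is only nef and big rather than ample. In that situation one cannot simply quote that $X$ is del Pezzo; instead one must argue with the nef and big class $-K_X$ directly, and the cleanest route is Kawamata--Viehweg vanishing (which only needs nef and big, not ample) rather than Kodaira vanishing. An alternative that avoids vanishing theorems altogether, and which is probably closer in spirit to \cite{GH07}, is to run an explicit induction: if $F$ is nef but not in the interior of the nef cone, intersect $F$ with the finitely many $(-1)$- and $(-2)$-curves, peel off a component, and reduce the computation of $h^0$ to that of a divisor of smaller degree, terminating at an obviously effective class; bookkeeping that the intersection numbers behave and that $\chi$ is preserved at each step is then the routine-but-delicate part. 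Either way, once $h^1 = h^2 = 0$ is established, the formula is immediate from Riemann--Roch.
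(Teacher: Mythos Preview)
Your proposal is correct and follows essentially the same route as the paper: the paper's proof is a two-line sketch stating that the formula ``is a consequence of Riemann--Roch and the fact that $h^1(X,F)=0$ for any numerically effective divisor $F$,'' with a pointer to Lemma~2.1b of \cite{GH07} for the vanishing. You have simply unpacked that sketch---supplying the Serre-duality argument for $h^2=0$, the Kawamata--Viehweg argument for $h^1=0$, and the observation that $[F]^2\ge 0$ and $[F]\cdot(-K_X)\ge 0$ force $\chi(X,F)\ge 1$ and hence effectiveness---so there is no substantive difference in strategy.
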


\begin{proof}
This is a consequence of Riemann-Roch and the fact that $h^1(X, F) = 0$ for any numerically effective divisor $F$. See Lemma 2.1b of \cite{GH07} for a discussion.
\end{proof}

The set of classes of effective, reduced, and irreducible curves of negative intersection is
$$\text{NEG}(X) := \{ [C] \in \text{Cl}(X) : [C]^2 <0, C \text{ is effective, reduced, and irreducible}\}.$$
The set of classes in $\text{NEG}(X)$ with self intersection less than $-1$ is
$$\text{neg}(X) := \{[C] \in \text{NEG}(X) : [C]^2<-1 \}.$$

The following result of Guardo and Harbourne allows us to easily identify divisor classes belonging to $\text{NEG}(X)$.  In the lemma, the \textit{curves defining the configuration type} are lines that pass through any three points or conics that pass through any six points.  For example, the divisors defining the configuration type shown in Figure \ref{fig:6HArrangement} are $E_0-E_1-E_2-E_3$ and $E_0-E_1-E_4-E_5$.  

\begin{lem}[{Lemma 2.1d of \cite{GH07}}]
\label{lem:NegElements}
The elements of $\text{neg}(X)$ are the classes of divisors that correspond to the curves defining the configuration types.  Further, 
$$\text{NEG}(X) = \text{neg}(X) \cup \{ [C] \in \mathcal{B} \cup \mathcal{L} \cup \mathcal{Q} : [C]^2 = -1, [C] \cdot [D] \geq 0 \text{ for all } D \in \text{neg}(X) \}$$
where $\mathcal{B} = \{e_i : i >0 \}$, $\mathcal{L} = \{ e_0 - e_{i_1}- \cdots -e_{i_r} : r \geq 2, 0 <i_1< \cdots < i_r \leq 6 \}$, and $\mathcal{Q} = \{ 2e_0 - e_{i_1} - \cdots - e_{i_r} : r \geq 5, 0 < i_1 < \cdots < i_r \leq 6 \}$.
\end{lem}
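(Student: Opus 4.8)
The plan is to exploit the fact that, for each of the eleven configuration types, the six points lie in almost general position, so that $-K_X = 3e_0 - e_1 - \cdots - e_6$ is nef and big (with $(-K_X)^2 = 3$) and $X$ is a weak del Pezzo surface of degree three; equivalently, no four of the six points are collinear, and I would record this as the standing hypothesis. The first step is an adjunction argument: if $C$ is an irreducible curve on $X$ with $C^2 < 0$, then $C \cdot K_X \le 0$ because $-K_X$ is nef, while $C^2 + C \cdot K_X = 2 p_a(C) - 2 \ge -2$ and $C^2 + C \cdot K_X$ is even; combining these with $C^2 < 0$ leaves only $(C^2, C\cdot K_X) \in \{ (-1,-1), (-2,0) \}$. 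So every negative curve is a smooth rational $(-1)$-curve or $(-2)$-curve, $\text{NEG}(X)$ is the union of these two sets, and $\text{neg}(X)$ is precisely the set of $(-2)$-curves on $X$.

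Next I would enumerate the finitely many candidate classes. Writing a class as $C = a e_0 - \sum_{i=1}^6 b_i e_i$ and imposing the two possible values of $(C^2, C\cdot K_X)$, the Cauchy--Schwarz inequality $\sum b_i^2 \ge \frac16 (\sum b_i)^2$ forces $0 \le a \le 2$, and a short check of the remaining integer solutions shows that the $(-1)$-classes are exactly $\mathcal{B} \cup \{ e_0 - e_i - e_j \} \cup \{ 2e_0 - e_{i_1} - \cdots - e_{i_5} \}$ and the $(-2)$-classes are exactly $\{ \pm(e_i - e_j) \} \cup \{ e_0 - e_i - e_j - e_k \} \cup \{ 2e_0 - e_1 - \cdots - e_6 \}$. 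In particular every $(-1)$-class is an element of $\mathcal{B} \cup \mathcal{L} \cup \mathcal{Q}$ of self-intersection $-1$. Because the points are distinct, no class $\pm(e_i - e_j)$ is effective; a class $e_0 - e_i - e_j - e_k$ is effective if and only if $p_i, p_j, p_k$ are collinear, and then its unique effective representative is the reduced irreducible strict transform of that line; and $2e_0 - e_1 - \cdots - e_6$ lies in $\text{neg}(X)$ if and only if the six points lie on an irreducible conic (a degenerate conic contributes two line classes instead). This identifies $\text{neg}(X)$ with the classes of the lines through three of the points together with the irreducible conics through all six, i.e.\ with the curves defining the configuration type.

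For the remaining $(-1)$-classes I would prove the equivalence: a $(-1)$-class $[C]$ lies in $\text{NEG}(X)$ if and only if $[C] \cdot [D] \ge 0$ for every $[D] \in \text{neg}(X)$. One direction is immediate, since distinct irreducible curves meet non-negatively and a $(-1)$-curve is distinct from every $(-2)$-curve. For the converse, suppose $[C]$ is a $(-1)$-class meeting every element of $\text{neg}(X)$ non-negatively. Riemann--Roch gives $\chi(\mathcal{O}_X(C)) = 1$, and $h^2(\mathcal{O}_X(C)) = h^0(\mathcal{O}_X(K_X - C)) = 0$ since $(K_X - C)\cdot(-K_X)$ is negative and $-K_X$ is nef; hence $h^0(\mathcal{O}_X(C)) \ge 1$ and $[C]$ is effective. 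Pick an effective $D \sim C$. As $D^2 = -1 < 0$, some component $C_0$ of $D$ has $D \cdot C_0 < 0$, so $C_0 \in \text{NEG}(X)$; the hypothesis rules out $C_0 \in \text{neg}(X)$, so $C_0$ is a $(-1)$-curve. Then $E := D - C_0$ is effective, with $E^2 = -2 - 2(D\cdot C_0) \ge 0$ (using $D^2 = C_0^2 = -1$ and $D\cdot C_0 \le -1$) and $E \cdot (-K_X) = 1 - 1 = 0$ (both $D$ and $C_0$ meet $-K_X$ with multiplicity $1$); the Hodge index theorem, applicable since $-K_X$ is nef and big, forces $E^2 \le 0$, hence $E^2 = 0$ and $E$ is numerically, so linearly, trivial, so $E = 0$ as it is effective. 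Thus $D = C_0$ is an irreducible $(-1)$-curve and $[C] \in \text{NEG}(X)$. Combining this with the previous paragraph yields the stated description of $\text{NEG}(X)$.

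The main obstacle is exactly this last converse --- upgrading ``$[C]$ is effective and meets every $(-2)$-curve non-negatively'' to ``$[C]$ is an irreducible $(-1)$-curve,'' ruling out representatives that are sums of several exceptional curves; the Hodge-index argument above is the efficient route, but it rests on $-K_X$ being nef and big, so a routine but genuinely necessary preliminary is to verify that this holds for each of the eleven configuration types. Without it the adjunction dichotomy of the first step collapses: if four of the points were collinear, the strict transform of that line would be an irreducible negative curve with $C\cdot K_X > 0$, outside the case analysis.
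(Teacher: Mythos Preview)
The paper does not supply its own proof of this lemma: it is quoted verbatim as Lemma~2.1d of \cite{GH07} and used as a black box in Section~\ref{sec:HilbFn}. So there is no in-paper argument to compare against; the relevant question is simply whether your argument is correct.

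It is not, and the gap is exactly the one you flag in your last paragraph but then dismiss as ``routine.'' Your entire argument rests on $-K_X$ being nef, equivalently on no four of the six points being collinear. This is \emph{false} for several of the eleven configuration types. For instance, the paper itself describes configuration~$F$ as containing a line $C_{1,4}$ through four of the points (see Section~\ref{sec:questions}); other types have five or six collinear points. For such a configuration the strict transform of the $4$-secant line is an irreducible curve with class $e_0 - e_{i_1} - e_{i_2} - e_{i_3} - e_{i_4}$, self-intersection $-3$, and $C\cdot K_X = +1$, so it is neither a $(-1)$-curve nor a $(-2)$-curve. Your adjunction dichotomy in Step~1 therefore collapses, your enumeration of candidate classes in Step~2 is incomplete, and the Hodge-index/Riemann--Roch machinery in Step~3 (which uses nefness of $-K_X$ both to kill $h^2$ and to run the index argument) no longer applies. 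The ``routine preliminary'' you propose---checking $-K_X$ is nef for each type---would in fact fail the first time you tried it on type~$F$.

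Your argument is a perfectly good proof of the lemma \emph{restricted to those configuration types with at most three points on any line} (types such as $A$, $B$, $H$, etc.), i.e.\ precisely the weak del Pezzo cases. To cover the remaining types you would need a different mechanism---for example, handling $(-n)$-curves with $n\ge 3$ directly, or reducing to the result in \cite{GH07}, which is proved there without the nef-anticanonical hypothesis.
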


\begin{figure}
\centering
\includegraphics[height=2cm]{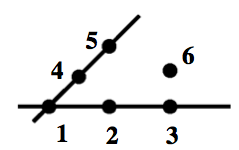}
\caption{Points $p_1, \dots, p_6$ of configuration type H.}
\label{fig:6HArrangement}
\end{figure}

The following result will be used in Procedure \ref{proc:findH}; see Section 2 of \cite{GH07}.

\begin{lem}
\label{lem:Equalh0}
Suppose that $[C]  \in \text{NEG}(X)$ is such that $[F] \cdot [C] <0$.  Then $h^0(X, F) = h^0(X, F-C)$.
\end{lem}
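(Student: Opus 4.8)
The plan is to show that the hypothesis $[F]\cdot[C]<0$ forces $C$ to be a fixed component of the complete linear system $|F|$, so that ``subtracting off $C$'' identifies the global sections of $F$ with those of $F-C$. Concretely, I would start from the structure exact sequence of the curve $C \subseteq X$, namely $0 \to \mathcal{O}_X(-C) \to \mathcal{O}_X \to \mathcal{O}_C \to 0$, and tensor it with the locally free sheaf $\mathcal{O}_X(F)$ to obtain
$$0 \longrightarrow \mathcal{O}_X(F-C) \longrightarrow \mathcal{O}_X(F) \longrightarrow \mathcal{O}_X(F)|_C \longrightarrow 0.$$
Passing to the associated long exact sequence in cohomology, the natural inclusion $H^0(X,F-C) \hookrightarrow H^0(X,F)$ will be an isomorphism as soon as $H^0(C,\mathcal{O}_X(F)|_C)=0$, which is the one thing left to check.

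To obtain that vanishing, I would use that $[C]\in\text{NEG}(X)$ means $C$ is effective, reduced, and irreducible, hence an integral projective curve, and that the restricted line bundle $\mathcal{O}_X(F)|_C$ has degree equal to the intersection number $[F]\cdot[C]$, which is negative by hypothesis. A line bundle of negative degree on an integral projective curve has no nonzero global sections: a nonzero section would vanish along an effective Cartier divisor on $C$, which would force the degree to be nonnegative. Combining this with the exact sequence above gives $h^0(X,F)=h^0(X,F-C)$, as desired.

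Since the argument is entirely formal, there is essentially no obstacle here; the only points deserving a word of care are the identification $\deg(\mathcal{O}_X(F)|_C)=[F]\cdot[C]$ — which is just the definition of the intersection product on the smooth surface $X$, using that $C$ is a curve — and the standard fact about negative-degree line bundles on integral curves. If one prefers to avoid sheaf cohomology altogether, the same conclusion can be reached directly: any effective divisor $D\in|F|$ has $D\cdot C=[F]\cdot[C]<0$, which is impossible unless $C\subseteq\text{Supp}(D)$, since an irreducible curve not contained in $D$ meets $D$ nonnegatively; hence $D-C\geq 0$, and $D\mapsto D-C$ sets up a bijection between $|F|$ and $|F-C|$. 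I would present the cohomological version as the cleaner write-up and mention the divisor-theoretic reformulation as a remark.
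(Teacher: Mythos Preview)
Your argument is correct and is the standard proof of this fact. The paper does not actually prove the lemma itself but simply refers the reader to Section~2 of \cite{GH07}; the argument there is exactly the one you give, namely that $C$ must appear as a fixed component of $|F|$ because an irreducible curve meeting $F$ negatively lies in the base locus, so $h^0(X,F)=h^0(X,F-C)$.
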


Knowing how to compute $h^0(X, H)$ for a numerically effective divisor $H$ will allow us to compute $h^0(X, F)$ for \textit{any} divisor $F$.  In particular, given a divisor $F$, there exists a divisor $H$ such that $h^0(X, F) = h^0(X, H)$ and either: 
\begin{enumerate}
\item[(a)]  $H$ is numerically effective so $$h^0(X, F) = h^0(X, H) = (H^2-H\cdot K_X)/2+1$$ by Lemma \ref{lem:h0ofNEFF}; or
\item[(b)]  there is a numerically effective divisor $G$ such that $[G]\cdot [H] <0$ so $[H]$ is not the class of an effective divisor and $h^0(X, F) = h^0(X, H) = 0$.
\end{enumerate}

The method for finding such an $H$ is as follows. 

\begin{procedure}[{Remark 2.4 of \cite{GH07}}]
\label{proc:findH}
Given a divisor $F$ we can find a divisor $H$ with $h^0(X, F) = h^0(X, H)$ satisfying either condition (a) or (b) above as follows.
\begin{enumerate}
\item Reduce to the case where $[F] \cdot e_i \geq 0$ for all $i=1, \dots, n$:  if $[F]\cdot e_i <0$ for some $i$, $h^0(X, F) = h^0(X, F-([F]\cdot e_i)E_i)$, so we can replace $F$ with $F - ([F]\cdot e_i) E_i$.
\item Since $L$ is numerically effective, if $[F]\cdot e_0<0$ then $[F]$ is not the class of an effective divisor and we can take $H=F$ (case (b)).
\item If $[F] \cdot [C] \geq 0$ for every $[C] \in \text{NEG}(X)$ then, by Lemma \ref{lem:NegElements}, $F$ is numerically effective, so we can take $H = F$ (case (a)).
\item If $[F] \cdot [C] <0$ for some $[C]\in \text{NEG}(X)$ then $h^0(X, F) = h^0(X, F-C)$ by Lemma \ref{lem:Equalh0}. Then replace $F$ with $F-C$ and repeat from Step 2.  
\end{enumerate}
\end{procedure}

There are only a finite number of elements in $\text{NEG}(X)$ to check by Lemma \ref{lem:NegElements} so it is possible to complete Step 3.  Further, $[F] \cdot e_0 > [F-C] \cdot e_0$ when $[C] \in \text{NEG}(X)$, so the condition in Step 2 will be satisfied after at most $[F]\cdot e_0 +1$ repetitions.  Thus, the process will terminate.

Taking these results together we can compute the Hilbert function of $I^{(m)}$ as follows.
\begin{enumerate}
\item Compute $\text{NEG}(X)$ from $\text{neg}(X)$ using Lemma \ref{lem:NegElements}.
\item Find $H_t$ corresponding to $F_t$ using Procedure \ref{proc:findH} for all $t$.
\item Compute $H_{I^{(m)}}(t) = h^0(X, F_t) = h^0(X, H_t)$ with Lemma \ref{lem:h0ofNEFF}.
\end{enumerate}

\section{Proof of the Main Theorem}
\label{sec:proof}

In this section, we will outline the proof of Theorem \ref{thm:mainthm}.
Recall that ideals of points with the same configuration type have the same symbolic generic initial system by Corollary \ref{cor:GISConfigType} so the statement of the theorem makes sense.  Further, Proposition \ref{prop:AllConfigsCovered} ensures that the theorem includes all possible sets of six points.

If $I$ is the ideal of a set of six points having configuration type $E$, $G$, or $K$, the theorem follows from the main result of \cite{Mayes12a}.  Likewise, if $I$ is the ideal of six points of configuration type $A$, the theorem follows from the main result of \cite{Mayes12c}.

For the remaining cases we can find the limiting polytope of $\{ \text{gin}(I^{(m)}) \}_m$ by following the five steps below.  First, we record a lemma that will be used in Step 2.

\begin{lem}
\label{lem:NumberInDegree}
Let $J$ be a monomial ideal of $K[x,y,z]$ generated in the variables $x$ and $y$.  Then the number of elements of $J$ of degree $t$ only involving the variables $x$ and $y$ is equal to $H_J(t) - H_J(t-1)$.  The number of minimal generators of $J$ in degree $t$ is equal to $H_J(t) - H_J(t-2)-1$.
\end{lem}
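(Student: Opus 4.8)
The plan is to exploit the very rigid structure of $J$ guaranteed by the hypothesis that it is a monomial ideal generated in $x$ and $y$ alone. Because no generator involves $z$, any monomial of $J$ of degree $t$ factors as $z^k \cdot n$ where $n$ is a monomial in $x,y$ of degree $t-k$ lying in $J$. Thus the monomials of $J$ in degree $t$ that involve \emph{only} $x$ and $y$ are exactly the degree-$t$ monomials of the form $x^a y^b$ with $a+b = t$ that lie in $J$. The first step is therefore to separate $\dim(J_t)$ into contributions indexed by the $z$-exponent: writing $M_s$ for the number of monomials in $x,y$ of degree $s$ belonging to $J$, one gets $H_J(t) = \sum_{k \geq 0} M_{t-k} = M_t + M_{t-1} + \cdots + M_0$. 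Subtracting the analogous identity for $t-1$ gives $H_J(t) - H_J(t-1) = M_t$, which is precisely the first assertion.

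For the second assertion I would combine the first formula with the description of minimal generators coming from strong stability (Borel-fixedness in characteristic $0$), or rather directly from the fact that $J$ restricted to $K[x,y]$ is an ideal of the polynomial ring in two variables: its minimal generators are the ``staircase corners''. Concretely, if $J \cap K[x,y]$ has minimal generators $x^{a_i} y^{b_i}$ with $a_0 > a_1 > \cdots$ and $b_0 < b_1 < \cdots$ (as in the shape given by Proposition \ref{prop:formofgin}), then in degree $t$ the number of such generators is $M_t - M_{t-1} + (\text{correction})$; more cleanly, since in two variables $M_t$ (the number of degree-$t$ monomials in the ideal) and the number of degree-$t$ generators $g_t$ are related by $g_t = M_t - M_{t-1} + [\,t \geq 1\,]\cdot(\text{number of degree-}(t-1)\text{ monomials not in }J\text{ that become}\dots)$. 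The slick route is: the number of minimal generators of $J$ in degree $t$ equals the number of monomials $x^a y^b$ of degree $t$ that lie in $J$ but such that neither $x^{a-1}y^b$ nor $x^a y^{b-1}$ lies in $J$; a short counting argument on the staircase, or equivalently the identity $g_t = M_t - 2M_{t-1} + M_{t-2} + 1$ for two-variable monomial ideals (valid once $M$ has stabilized appropriately), together with $M_s = H_J(s) - H_J(s-1)$ from the first part, telescopes to $g_t = H_J(t) - H_J(t-2) - 1$.

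The main obstacle is getting the constant $-1$ and the exact index shifts right in the second formula, i.e., correctly accounting for the boundary behaviour of the staircase at the ``top'' (large $x$-exponent, where the ideal meets the $x$-axis) and at the ``bottom''. One must be careful that the formula $g_t = M_t - 2M_{t-1} + M_{t-2} + 1$ for the number of generators in degree $t$ of a two-variable monomial ideal holds without further hypotheses; the clean way to see it is to note that for an artinian-type staircase the second difference of $M_s$ counts exactly the corner monomials, and the $+1$ compensates for the fact that $M$ is eventually equal to the full count $s+1$ of degree-$s$ monomials only after the last generator. I would verify this by writing $M_s = (s+1) - (\text{number of degree-}s\text{ monomials in }x,y \text{ not in }J)$, observing the complement is a finite ``staircase region'' whose second difference is well understood, and pushing the algebra through; this is routine but is where the bookkeeping must be done carefully. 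Everything else is a direct translation between $z$-graded pieces and two-variable pieces as in the first paragraph.
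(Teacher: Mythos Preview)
Your argument for the first assertion is correct and is exactly the paper's: the monomials of $J_t$ that involve $z$ are in bijection (via multiplication by $z$) with the monomials of $J_{t-1}$, so the number $M_t$ of degree-$t$ monomials in $J$ involving only $x,y$ is $H_J(t)-H_J(t-1)$.

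For the second assertion, the paper takes a shorter and more direct route than you do. It simply observes that a degree-$t$ monomial of $J$ in $x,y$ fails to be a minimal generator exactly when it lies in $\{x\cdot m,\ y\cdot m : m\in J_{t-1}\cap K[x,y]\}$; since (for the staircase shape coming from Proposition~\ref{prop:formofgin}) the $M_{t-1}$ monomials $m$ form a contiguous block $x^{t-1},x^{t-2}y,\dots$, this set has exactly $M_{t-1}+1$ elements. Hence $g_t = M_t-(M_{t-1}+1)$, and substituting $M_s=H_J(s)-H_J(s-1)$ finishes. No second differences or staircase-complement analysis is needed.

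Your proposed route, by contrast, has a genuine gap: the algebra does not close. You assert $g_t=M_t-2M_{t-1}+M_{t-2}+1$ and then claim this ``telescopes'' to $H_J(t)-H_J(t-2)-1$. But substituting $M_s=H_J(s)-H_J(s-1)$ into $M_t-2M_{t-1}+M_{t-2}+1$ gives
\[
H_J(t)-3H_J(t-1)+3H_J(t-2)-H_J(t-3)+1,
\]
a third difference, not $H_J(t)-H_J(t-2)-1$. So even granting your second-difference formula for $g_t$ (which you do not prove, and which also has the wrong shape compared to the direct count $g_t=M_t-M_{t-1}-1$), the final substitution step is arithmetically incorrect. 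The clean fix is to abandon the second-difference detour and count the non-minimal monomials directly as the paper does.
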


\begin{proof}
The first statement follows from the fact that there are exactly $H_J(t-1)$ monomials of $J$ of degree $t$ involving the variable $z$.  The number of generators in degree $t$ is equal to the number of monomials of $J$ in the variables $x$ and $y$ of degree $t$ minus the number of monomials of $J$ that arise from multiplying the elements of degree $t-1$ in $x$ and $y$ by the variables $x$ and $y$.  Using this, the last statement follows from the first.
\end{proof}

\begin{enumerate}

\item[\textbf{Step 1:}]  Find the Hilbert function of $I^{(m)}$ for infinitely many $m$ by using the method outlined in Section \ref{sec:HilbFn}.

\item[\textbf{Step 2:}]  Find the number of minimal generators of $\text{gin}(I^{(m)})$ of each degree $t$ for infinitely many $m$.  We can use Lemma \ref{lem:NumberInDegree} for this computation because $\text{gin}(I^{(m)})$ is an ideal generated in the variables $x$ and $y$ (Proposition \ref{prop:formofgin}) and we know the Hilbert function of $\text{gin}(I^{(m)})$ from Proposition \ref{prop:commonproperties} and Step 1.

\item[\textbf{Step 3:}]  Write down the generators of $\text{gin}(I^{(m)})$ for infinitely many $m$.  Note that this follows from Step 2 since 
$$\text{gin}(I^{(m)}) = (x^{\alpha(m)}, x^{\alpha(m)-1}y^{\lambda_{\alpha(m)-1}}, \dots, xy^{\lambda_1(m)}, y^{\lambda_0(m)})$$
where $\lambda_0(m) > \cdots > \lambda_{k-1}(m) \geq 1$ by Proposition \ref{prop:formofgin}.

\item[\textbf{Step 4:}]  Compute the Newton polytope $P_{\text{gin}(I^{(m)})}$ of each $\text{gin}(I^{(m)})$ for infinitely many $m$.  Recall that the boundary of these polytopes is determined by the convex hull of the points $(i, \lambda_i(m))$ and $(\alpha(m),0)$.

\item[\textbf{Step 5:}]  Find the limiting polytope of the symbolic generic initial system of $I$.  To do this it suffices to take the limit
$$P = \bigcup_{m \in \mathbb{N}^*} \frac{1}{m} P_{\mathrm{a}_m}$$
over an infinite subset of $\mathbb{N}^*$.

\end{enumerate}

All of the remaining calculations are similar but long so, for the sake of space, we will only record the proof here for configuration H.

	\subsection{Proof of main theorem for configuration H}

Let $I$ be the ideal of points $p_1, \dots, p_6$ of configuration type H, ordered as in Figure \ref{fig:6HArrangement}.


\noindent \textbf{Step 1.}

First we will follow the method outlined in Section 3 to find $H_{I^{(m)}}$ for infinitely many $m$.  We will use the notation from Section \ref{sec:HilbFn} and will often denote the divisor $a_0E_0-(a_1E_1+a_2E_2+a_3E_3+a_4E_4+a_5E_5+a_6E_6)$ by $(a_0; a_1, a_2, a_3, a_4, a_5, a_6)$.  Also, if $F_1$ and $F_2$ are divisors, $F_1 \cdot F_2$ denotes $[F_1]\cdot[F_2]$, the intersection multiplicity of their classes.

First we need to determine $\text{NEG}(X)$.  Note that the configuration type H is defined by a line through points 1, 2, and 3 and another line through points 1, 4, and 5.  Thus, $\text{neg}(X)$ consists of the classes of $ A_1:=E_0-E_1-E_2-E_3$ and $A_2:=E_0-E_1-E_4-E_5$.
The other elements of $\text{NEG}(X)$ are exactly those $[C] \in \mathcal{B} \cup \mathcal{L} \cup \mathcal{Q}$ such that $[C]^2 =-1$  and $[C] \cdot [D] \geq 0$ for all $[D] \in \text{neg}(X)$ by Lemma \ref{lem:NegElements}.  Using this, one can check that $\text{NEG}(X)$ consists of the classes of the divisors
\begin{eqnarray*}
 &&A_1:=E_0-E_1-E_2-E_3, A_2:=E_0-E_1-E_4-E_5, \\
 &&B := E_0 - E_1-E_6, \\
&&C_i := E_0 - E_i-E_6 \text{    for } i = 2, 3, 4, 5, \\
&&D_{ij} :=E_0-E_i-E_j \text{    for } i = 2, 3 \text{ and } j=4,5, \\
 &&Q:=2E_0-E_2-E_3-E_4-E_5-E_6.
\end{eqnarray*}

Next, we will follow Procedure \ref{proc:findH} for each $F_t$ once we fix $m$ divisible by 12.  The procedure produces a divisor $H_t$ that is either numerically effective or is in the class of an effective divisor such that
$$H_{I^{(m)}}(t) = h^0(X, F_t) = h^0(X, H_t).$$
First, we will make some observations about which elements of $\text{NEG}(X)$ may be subtracted during the procedure.

Suppose that $J$ is a divisor of the form $J:=(a; b, c, c, c, c, d)$.   We will show that if the procedure allows us to subtract one $A_i$ (respectively, one $C_i$ or one $D_{ij}$) from $J$, we can subtract them all consecutively.  This is equivalent to showing that if the intersection multiplicity of $J$ with $A_1$ is negative then the intersection multiplicity of $J-A_1$ with $A_2$ is also negative; parallel statements hold for the $C_i$ and $D_{ij}$.

$\mathbf{A_i}:$ 
\begin{eqnarray*}
J \cdot A_1 &=& a-b-2c \\
(J-A_1)\cdot A_2 &=& (a-1; b-1, c-1, c-1, c, c, d) \cdot A_2 \\
				&=& a-1-(b-1)-2c = a-b-2c\\
\end{eqnarray*}

$\mathbf{C_i}:$ 
\begin{eqnarray*}
J \cdot C_2 &=& a-c-d\\
(J-C_2) \cdot C_3 &=& (a-1; b, c-1, c, c, c, d-1) \cdot C_3\\
				& =& (a-1)-c-(d-1) = a-c-d\\
(J-C_2-C_3) \cdot C_4 &=& (a-2; b, c-1,c-1,c,c,d-2) \cdot C_4 \\
				&=& (a-2) - c- (d-2) = a-c-d\\
(J-C_2-C_3-C_4) \cdot C_5 &=& (a-3; b; c-1, c-1, c-1, c, d-3) \cdot C_5 \\
				&=& (a-3)-c-(d-3) = a-c-d
\end{eqnarray*}

$\mathbf{D_{ij}}:$ 
\begin{eqnarray*}
J \cdot D_{24} &=& a-2c\\
(J-D_{24}) \cdot D_{25} &=& (a-1; b, c-1, c, c-1, c, d) \cdot D_{25} \\
				&=& (a-1)-(c-1)-c = a-2c\\
(J-D_{24}-D_{25}) \cdot D_{34} &=& (a-2; b, c-2, c, c-1, c-1, d) \cdot D_{34}\\
				& =& (a-2)-c-(c-1) = a-2c-1\\
(J-D_{24}-D_{25}-D_{34}) \cdot D_{35} &=& (a-3; b, c-2, c-1, c-2, c-1, d) \cdot D_{35}\\
				& =& (a-3)-2(c-1) = a-2c-1
\end{eqnarray*}

Define 
$$A:=A_1+A_2,  \hspace{0.1in} C :=C_2+C_3+C_4+C_5,  \hspace{0.1in} D:=D_{24}+D_{25}+D_{34}+D_{35}.$$
The calculations above show that if $J\cdot A_1 <0$ (if $J \cdot  C_2 <0$, $J \cdot D_{24} <0$, respectively) then the procedure will allow us to subtract one entire copy of $A$ ($C$, $D$). 
If we begin with a divisor of the form $J = (a; b, c, c, c, c, d)$ then $J-A$, $J-B$, $J-C$, $J-D$, and $J-Q$ have the same form.  These facts taken together mean that  that $H_t$ is obtained from $F_t$ - a divisor with the same form as $J$ -  by subtracting off copies of  $A$, $B$, $C$, $D$, and $Q$.

In Procedure \ref{proc:findH}, the requirement for being able to subtract an element of $\text{NEG}(X)$ from $J$ is that the intersection of that element with $J$ is strictly negative.  Thus, it is of interest how the intersection multiplicities with elements of $\text{NEG}(X)$ \textit{change} as other elements of $\text{NEG}(X)$ are subtracted from a divisor of the form $(a; b, c, c, c, c, d)$.

If $J= (a; b, c, c, c, c, d)$ as above, we have the following.

\begin{center}

\begin{tabular}{|r| c c c c c|}
\hline
& \multicolumn{5}{|c|}{value of $G$}\\
 &$A_i$&$B$&$C_i$&$D_{ij}$&$Q$\\
 \hline
 $(J-A)\cdot G - J\cdot G$ &2&0&-1&0&0\\
 $(J-B)\cdot G - J\cdot G$ &0&1&0&-1&-1\\
  $(J-C)\cdot G - J\cdot G$ &-2&0&1&-2&0\\
 $(J-D)\cdot G - J\cdot G$ &0&-4&-2&0&0\\
 $(J-Q)\cdot G - J\cdot G$ &0&-1&0&0&1\\
 \hline
\end{tabular}
\end{center}

We now use this set-up to obtain $H_t$ from $F_t$ by successively subtracting elements of $\text{NEG}(X)$ that have negative intersection with the remaining divisor.  First note that 
$$F_t \cdot A_i = t-3m< 0 \iff t<3m,$$ 
$$F_t \cdot B = F_t \cdot C_i = F_t \cdot D_{ij} = t-2m< 0 \iff t<2m,$$
and 
$$F_t \cdot Q = 2t-5m <0 \iff t<\frac{5m}{2}.$$

Therefore, $[F_t] = [H_t]$ (that is, $F_t$ is numerically effective) if and only if $t \geq 3m$.  In this case, $h^0(X, F_t) = \frac{1}{2}t^2-3m^2+\frac{3}{2}t-3m+1$ by Lemma \ref{lem:h0ofNEFF}.

We will assume from this point on that $12 | m$. 

Now suppose that $3m>t\geq \frac{5m}{2}$.  In this case, $[A_i] \cdot [F_t] <0$, but $[C] \cdot [F_t] \geq 0$ for all other $[C] \in \text{NEG}(X)$; thus, Procedure \ref{proc:findH} allows us to subtract $A_i$ - and thus $A$ - but no other divisors initially.  How many copies can we subtract?  From the table, we see that the intersection multiplicity of the remaining divisor with $A_i$ increases by 2 each time we subtract a copy of $A_i$.  We can keep subtracting copies of $A$ as long as the intersection multiplicity with $A_i$ is strictly negative; thus, we can subtract exactly 
$$\Bigg \lceil -\frac{F_t \cdot A_i}{2} \Bigg \rceil = \Bigg \lceil \frac{3m-t}{2} \Bigg \rceil$$
copies of $A$.  The only other intersection multiplicity that changes through the process subtracting $A$s is with the $C_i$, which decreases by one for each copy of $A$ subtracted.  Thus, 
$$\Bigg(F_t- \Bigg \lceil \frac{3m-t}{2} \Bigg \rceil A \Bigg) \cdot C_i = t-2m- \Bigg \lceil \frac{3m-t}{2} \Bigg\rceil$$
and this is never negative when $t \geq \frac{5m}{2}$ ($t$ must be at most $\frac{7m}{3}$ for this expression to be negative).
Thus, the intersection multiplicity of $F_t - \big \lceil \frac{3m-t}{2} \big \rceil$with all $[C] \in \text{NEG}(X)$ is nonnegative, so 
$$H_t = \Bigg( t-2\Bigg \lceil \frac{3m-t}{2} \Bigg \rceil; m- 2 \Bigg \lceil \frac{3m-t}{2} \Bigg \rceil, m- \Bigg \lceil \frac{3m-t}{2} \Bigg \rceil, \dots, m \Bigg).$$
When $t$ is even,
$$H_t = \Big( 2t-3m; t-2m, \frac{t-m}{2}, \dots, m \Big)$$
and $h^0(X, F_t) = t^2-3tm-\frac{3}{2}m^2+\frac{3}{2}t-3m+1$ while when $t$ is odd 
$$H_t = \Big( 2t-3m-1; t-2m-1, \frac{t-m-1}{2}, \dots, m \Big)$$
and $h^0(X, F_t) = t^2-3tm-\frac{3}{2}m^2+\frac{3}{2}t-3m+\frac{1}{2}$.

Now suppose that $\frac{5m}{2}>t \geq \frac{7m}{3}$.  In this case, Procedure \ref{proc:findH} allows us to subtract copies of $Q$ because $F_t \cdot Q<0$.  From the table, for each copy of $Q$ subtracted, the intersection multiplicity increases by 1; since we can keep subtracting copies of $Q$ as long as the intersection multiplicity with the remaining divisor is negative, we can subtract exactly $-F_t \cdot Q = 5m-2t$ copies. We may also subtract  $\Big \lceil \frac{3m-t}{2} \Big \rceil A$ by the same argument as in the previous case, since subtracting copies of $A$ doesn't change the intersection multiplicity with $Q$ and vice versa.  

Through the process of subtracting $A$s and $Q$s the intersection multiplicities with $C_i$ and $B$ have changed; in particular,  
$$(F_t- \Big \lceil \frac{3m-t}{2} \Big \rceil A - (5m-2t) Q) \cdot C_i  =  t-2m-\Bigg \lceil \frac{3m-t}{2} \Bigg \rceil$$
and 
$$ (F_t- \Big \lceil \frac{3m-t}{2} \Big \rceil A - (5m-2t) Q) \cdot C_i = t-2m-(5m-2t) = 3t-7m.$$
These are both nonnegative, as $t \geq \frac{7m}{3}$, so the intersection multiplicity of the remaining divisor with all elements of $\text{NEG}(X)$ is nonnegative and Procedure \ref{proc:findH} terminates.\footnote{$\frac{7m}{3}$ is always an integer under the divisibility assumption so we don't have to worry about $t$ being the smallest odd integer less than $\frac{7m+1}{3}$.}  Therefore, when $t$ is even
$$H_t = (6t-13m; t-2m, \frac{5t-11m}{2}, \dots, 2t-4m)$$
and $h^0(X, F_t) = 3t^2-13tm+14m^2+\frac{5}{2}t-\frac{11}{2}m+1$, while when $t$ is odd 
$$H_t =  (6t-13m-1; t-2m-1, \frac{5t-11m-1}{2}, \dots, 2t-4m)$$
and $h^0(X, F_t) = 3t^2-13tm+14m^2+\frac{5}{2}t-\frac{11}{2}m+\frac{1}{2}$.

Now suppose that $t = \frac{7m}{3}-1$.  By the same arguments as above, we can subtract $\big \lceil \frac{3m-t}{2} \big \rceil = \frac{2m}{6}$ copies of $A$ and $5m-2t = \frac{m}{3}+2$ copies of $Q$ when following Procedure \ref{proc:findH}.
Then
$$F_{\frac{7m}{3}-1} -\frac{2m}{6}A-\Big(\frac{m}{3}+2\Big)Q = \Big(m-7; \frac{m}{3}-2, \frac{m}{3}-3, \dots, \frac{2m}{3}-2\Big)$$
has intersection multiplicity 1 with $A_i$ and -2 with $C_i$.  At this point, Procedure \ref{proc:findH}  allows us to do the following.
\begin{itemize}
\item Subtract one copy of $C$. Now the intersection multiplicity with $A_i$ is $-1$ and the intersection multiplicity with $C_i$ is $-1$.
\item Subtract one copy of $A$.  Now the intersection multiplicity with $A$ is $1$ and the intersection multiplicity with $C_i$ is $-2$.  
\end{itemize}
It is clear that we can repeat this process as many times as we wish when we follow the procedure; eventually, we will end up with a divisor that has a negative $E_0$ coefficient.  We have that $H_{I^{(m)}}(t) =h^0(X, F_t) = 0$ when $t = \frac{7m}{3}-1$ and thus $H_{I^{(m)}}(t)=0$ for all $t < \frac{7m}{3}$.


\noindent \textbf{Step 2.}

Assume that $12 | m$.

Now we will turn our attention to the generic initial ideals of $I^{(m)}$.  We compute the number of generators of $\text{gin}(I^{(m)})$ in each degree using Lemma \ref{lem:NumberInDegree} and the Hilbert function values from Step 1.  We have the following.

\begin{center}

\begin{tabular}{|c|c|}
\hline
Value of $t$ & Number of generators of degree $t$\\
\hline
$t< \frac{7m}{3}$ & 0\\
$t = \frac{7m}{3}$ & $\frac{1}{3}m+1$\\
$t = \frac{7m}{3}+1$ & $\frac{2}{3}m+3$\\
$\frac{5m}{2} >t \geq \frac{7m}{3}+2$, $t$ even & $6$\\
$\frac{5m}{2} >t \geq \frac{7m}{3}+2$, $t$ odd & $4$\\
$t=\frac{5m}{2}$ & $6$\\
$t=\frac{5m}{2}+1$& $1$\\
$3m >t \geq \frac{5m}{2}+2$, $t$ even & $2$\\
$3m >t \geq \frac{5m}{2}+2$, $t$ odd & $0$\\
$t =3m$ & $2$\\
$t > 3m$ & 0\\
\hline
\end{tabular}
\end{center}

\noindent \textbf{Step 3.}

Assume once again that $12 | m$.

Note that there are 
$$\frac{\frac{5m}{2}- \frac{7m}{3}-2}{2} = \frac{\frac{m}{6}-2}{2} = \frac{m}{12}-1$$
even (or odd) integers $t$ such that $\frac{5m}{2} >t \geq \frac{7m}{3}+2$, and 
$$\frac{3m-\frac{5m}{2}-2}{2} = \frac{m}{4}-1$$
even (or odd) integers $t$ such that $3m >t \geq \frac{5m}{2}+2$.

Using the results of Step 2, we can find strictly decreasing $\lambda_i$ such that
$$\text{gin}(I^{(m)}) = (x^{k}, x^{k-1}y^{\lambda_{k-1}}, \dots, xy^{\lambda_1}, y^{\lambda_0}).$$
Since the smallest degree generator is of degree $\frac{7m}{3}$, $k = \frac{7m}{3}$.

The values of $\lambda_i$ that we obtain are shown in the following table.

\newpage

\begin{center}
\begin{adjustwidth}{-.5in}{-.5in}

\begin{tabular}{|r | c c c c : c c c c : c c c |}
\hline
degree & $\frac{7m}{3}$ & & & & $\frac{7m}{3}+1$ & & & & $\frac{7m}{3}+2$ & & \\
$i$ & $\frac{7m}{3}$ & $\frac{7m}{3}-1$ & $\cdots$& $2m$ & $2m-1$ & $2m-2$ & $\cdots$ & $\frac{4m}{3}-3$ & $\frac{4m}{3}-4$& $\cdots$ & $\frac{4m}{3}-9$ \\
$\lambda_i$ &0 & 1 & $\cdots$ & $\frac{m}{3}$ & $\frac{m}{3}+2$ & $\frac{m}{3}+3$ & $\cdots$ & $m+4$ & $m+6$ & $\cdots$ & $m+11$ \\
\hline
\end{tabular}

\begin{tabular}{|r | c c c : c : c c c c |}
\hline
degree & $\frac{7m}{3}+3$& & & $\cdots$ & $\frac{5m}{2}$ & & & \\
$i$ & $\frac{4m}{3}-10$ & $\cdots$ & $\frac{4m}{3}-13$ & $\cdots$ & $\frac{4m}{3}-4-10(\frac{m}{12}-1) = \frac{m}{2}+6$ & $\frac{m}{2}+5$ & $\cdots$ & $\frac{m}{2}+1$ \\
$\lambda_i$ & $m+13$ & $\cdots$ & $m+16$ & $\cdots$ & $m+6+12(\frac{m}{12}-1)=2m-6$ & $2m-5$ & $\cdots$& $2m-1$ \\
\hline
\end{tabular}

\begin{tabular}{|r | c : c c : c c :  c : c  c|}
\hline
degree & $\frac{5m}{2}+1$ & $\frac{5m}{2}+2$ & & $\frac{5m}{2}+4$ & & $\cdots$ & $3m$ & \\
$i$ & $\frac{m}{2}$ & $\frac{m}{2} -1$ & $\frac{m}{2}-2$ & $\frac{m}{2}-3$ & $\frac{m}{2}-4$& $\cdots$ & $\frac{m}{2}-1-2(\frac{m}{4}-1) = 1$& 0\\
$\lambda_i$ & $2m+1$ & $2m+3$ & $2m+4$ & $2m+7$ & $2m+8$ & $\cdots$ & $2m+3+4(\frac{m}{4}-1) = 3m-1$ & $3m$\\
\hline
\end{tabular}

\end{adjustwidth}
\end{center}

\vspace{0.1in}

\noindent \textbf{Step 4.}

Assume that $12 | m$.

The Newton polytope of $\text{gin}(I^{(m)})$ is the convex hull of the ideal when thought of as a subset of $\mathbb{R}^2$.  In particular, its boundary is determined by the points $(i, \lambda_i)$ recorded in the table from Step 3.  Plotting these points, one can see that the boundary of $P_{\text{gin}(I^{(m)})}$ is defined by the line segments through the points $(0,3m)$, $(\frac{m}{2}-2, 2m+4)$, $(\frac{m}{2}+1, 2m-1)$, $(\frac{4m}{3}-9,m+11)$, $(\frac{4m}{3}-3, m+4)$, $(2m, \frac{m}{3})$, and $(\frac{7m}{3},0)$.

\noindent \textbf{Step 5.}

Scaling $P_{\text{gin}(I^{(m)})}$ from the previous step by $\frac{1}{m}$ and taking the limit as $m$ approaches infinity, the limiting shape of the symbolic generic initial system is defined by the line segments through the following points. 
$$(0, 3) = \lim_{m \rightarrow \infty} \Big( 0, \frac{3m}{m} \Big)$$

$$\Big( \frac{1}{2}, 2 \Big) = \lim_{m \rightarrow \infty} \Big( \frac{m/2-2}{m}, \frac{2m+4}{m} \Big) =  \lim_{m \rightarrow \infty} \Big( \frac{m/2+1}{m}, \frac{2m-1}{m} \Big)$$

$$\Big( \frac{4}{3}, 1\Big) =  \lim_{m \rightarrow \infty} \Big( \frac{4m/3-9}{m}, \frac{m+11}{m} \Big) = \lim_{m \rightarrow \infty} \Big( \frac{4m/3-3}{m}, \frac{m+4}{m} \Big)$$

$$\Big( 2, \frac{1}{3} \Big) =  \lim_{m \rightarrow \infty} \Big( \frac{2m}{m}, \frac{m/3}{m} \Big)$$

$$\Big( \frac{7}{3}, 0\Big) = \lim_{m \rightarrow \infty} \Big( \frac{7m/3}{m}, 0 \Big)$$
Note that $(2, \frac{1}{3})$ lies on the line segment connecting $(\frac{4}{3},1)$ with $(\frac{7}{3},0)$ so it is not a vertex of the boundary of the limiting shape.
\section{Point Configurations and Limiting Shapes:  Questions and Observations}
\label{sec:questions}

In this section we investigate how the arrangement of points in a point configuration influences the limiting shape of the symbolic generic initial system of the corresponding ideal.  Throughout $I$ will be the ideal of a point configuration in $\mathbb{P}^2$ and $P$ will denote the limiting shape of $\{ \text{gin}(I^{(m)})\}_m$.

The following is Lemma 2.5 of \cite{Mayes12c} and is proven there; it describes how the number of points in a configuration is reflected in the limiting shape.

\begin{lem}
\label{lem:areaunder}
Let $I$ be the ideal corresponding to an arrangement of $r$ distinct points in $\mathbb{P}^2$.  If $Q$ is the complement of the limiting shape $P$ of $\{ \textnormal{gin}(I^{(m)})\}_m$ in $\mathbb{R}_{\geq 0}^2$, then the area of $Q$ is equal to $\frac{r}{2}$.
\end{lem}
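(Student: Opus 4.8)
The statement to prove is Lemma~\ref{lem:areaunder}: the region $Q = \mathbb{R}_{\geq 0}^2 \setminus P$ below the limiting shape has area exactly $r/2$. My plan is to relate the area of $Q$ to the asymptotic growth of the Hilbert function of $I^{(m)}$, which is controlled by the degree of the fat point scheme. The starting observation is that for a monomial ideal $J$ generated in $x,y$, the complement of its Newton polytope $P_J$ in $\mathbb{R}_{\geq 0}^2$ — more precisely the set of integer lattice points $(a,b)$ with $x^a y^b \notin J$ — counts, in each ``column'' $b = $ const, the monomials of the quotient $R/J$ supported in $x,y$. By Proposition~\ref{prop:formofgin} and Lemma~\ref{lem:NumberInDegree}, the number of monomials of $\text{gin}(I^{(m)})$ in $x,y$ of degree $t$ is $H_{I^{(m)}}(t) - H_{I^{(m)}}(t-1)$, so the number of monomials \emph{not} in $\text{gin}(I^{(m)})$ of degree $t$ (which, for $t$ large, is just $t+1$ minus the above) can be summed over $t$ to give the number of lattice points below the staircase of $P_{\text{gin}(I^{(m)})}$; this number is, up to a bounded error, the area of the complement $Q_m$ of $P_{\text{gin}(I^{(m)})}$ in $\mathbb{R}_{\geq 0}^2$.

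The key input is the Hilbert polynomial of $I^{(m)}$. Since $I^{(m)} = I_Z$ for the uniform fat point scheme $Z = m(p_1 + \cdots + p_r)$, the Hilbert polynomial of $R/I^{(m)}$ is the constant $\deg Z = r\binom{m+1}{2}$, so for $t \gg 0$ we have $H_{I^{(m)}}(t) = \binom{t+2}{2} - r\binom{m+1}{2}$. Therefore the total number of monomials in $x,y,z$ of degree $\leq T$ not lying in $I^{(m)}$ grows like $r\binom{m+1}{2}\cdot(\text{range of }t)$ plus lower-order terms; restricting to the $x,y$ monomials and summing appropriately, the lattice-point count of $Q_m$ is $\tfrac{r}{2}m^2 + O(m)$. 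Concretely: $\operatorname{area}(Q_m) = \sum_t \big[(t+1) - (H_{I^{(m)}}(t) - H_{I^{(m)}}(t-1))\big] + O(m)$, the summand vanishes once $t \geq \alpha(I^{(m)})$ is past the last generator, and a telescoping/Abel-summation argument turns the sum into $\deg Z + O(\alpha(I^{(m)}))= \tfrac{r}{2}m^2 + O(m)$ since $\alpha(I^{(m)}) = O(m)$.

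Finally, $Q$ is the (increasing) union of the scaled regions $\tfrac{1}{m} Q_m$, and $\operatorname{area}(\tfrac1m Q_m) = \tfrac{1}{m^2}\operatorname{area}(Q_m) \to \tfrac{r}{2}$ by the estimate above; since the $\tfrac1m Q_m$ are nested and contained in a fixed bounded region (boundedness of $P$ follows from $\alpha(I^{(m)})/m$ being bounded, e.g.\ by Nagata-type bounds, or simply from the explicit shapes), the area of the limit equals the limit of the areas. This gives $\operatorname{area}(Q) = r/2$.

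\textbf{Main obstacle.} The genuinely delicate point is controlling the $O(m)$ error terms carefully enough — both the gap between the lattice-point count below the staircase and the true Euclidean area of $Q_m$ (a boundary effect of size $O(\alpha(I^{(m)})) = O(m)$), and the transition range of $t$ near $\alpha(I^{(m)})$ where $H_{I^{(m)}}$ has not yet reached its Hilbert polynomial. One must check these contribute only $O(m)$, not $O(m^2)$, so that after dividing by $m^2$ they disappear in the limit; this uses that the regularity and the top generator degree $\alpha(I^{(m)})$ of $I^{(m)}$ both grow linearly in $m$, which follows from Proposition~\ref{prop:formofgin} together with the fact that $\alpha(I^{(m)}) \leq \alpha(I^{(m)})$ is bounded by a constant times $m$ (indeed $\alpha(I^{(m)}) \leq m\,\alpha(I^{(1)})$ since $(I^{(1)})^m \subseteq I^{(m)}$, and one has $(I^{(1)})^m$ nonzero in degree $m\,\alpha(I)$).
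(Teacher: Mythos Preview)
The paper does not actually prove this lemma; it states that the result is Lemma~2.5 of \cite{Mayes12c} and is proven there. So there is no in-paper argument to compare against, and I assess your sketch on its own.

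Your overall strategy---compute $\operatorname{area}(Q)$ as $\lim_m N_m/m^2$ with $N_m=\dim_K K[x,y]/\overline{J_m}=\deg(mZ)=r\binom{m+1}{2}$---is the natural one and is almost certainly what underlies the cited proof. But there is a genuine gap at the step ``$\operatorname{area}(Q_m)=N_m+O(m)$.'' You describe the $O(m)$ as a boundary effect, but that only controls the discrepancy between the lattice-point count $N_m$ and the area of the \emph{staircase} region $\{(a,b):x^{\lfloor a\rfloor}y^{\lfloor b\rfloor}\notin\textnormal{gin}(I^{(m)})\}$. The set $Q_m$ is the complement of the \emph{convex hull} of the exponent set, and the convex hull can cut a region of area $\Theta(m^2)$ off the staircase: for the strongly stable ideal $(x^m,\,x^{m-1}y^{m+1},\,x^{m-2}y^{m+2},\,\ldots,\,y^{2m})$ one has $N=(3m^2+m)/2$ but Newton-polytope-complement area only $m^2$. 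So the $O(m)$ bound does not follow from perimeter estimates alone, and you have invoked nothing specific to $\textnormal{gin}(I^{(m)})$ that would rule this out. The standard fix is to use the graded-system property $\textnormal{gin}(I^{(m)})\cdot\textnormal{gin}(I^{(m')})\subseteq\textnormal{gin}(I^{(m+m')})$ together with an Okounkov-body/semigroup volume argument (as in Musta\c{t}\u{a}'s work on multiplicities of graded sequences, or Lazarsfeld--Musta\c{t}\u{a}), which gives $\operatorname{vol}(\mathbb{R}_{\geq0}^2\setminus P)=\lim_m N_m/m^2$ directly without requiring $\operatorname{area}(Q_m)\approx N_m$ for each $m$. (A minor slip: the $\tfrac1m Q_m$ are nested \emph{decreasing}, not increasing, since the $\tfrac1m P_m$ increase; this does not affect the monotone-convergence step.)
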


While this lemma imposes strong restrictions on where the limiting shape can lie, we would like a more precise description in terms of geometry.

\begin{question}
\label{question:intercepts}
What is the meaning of the intercepts of the boundary of $P$?  Can one see these intercepts in the point configuration?
\end{question}

We may partially answer this question.  The $x$-intercept of $P$ is equal to $\lim_{m\rightarrow \infty} \frac{\alpha(m)}{m}$ where $\alpha(m)$ is the degree of the smallest degree element of $I^{(m)}$.  This limit has been studied for some special point configurations and is sometimes equal to the \textit{Seshadri constant} of $I$ (see, for example, \cite{BC10} and \cite{Harbourne02}).  Unfortunately, it does not seem as though there is a simple connection to the point configuration.

The $y$-intercept of the boundary of $P$ is equal to $\lim_{m\rightarrow \infty} \frac{\text{reg}(I^{(m)})}{m}$ (see Lemma 3.1 of \cite{Mayes12c}).  This limit is not as well-studied as the previous one, but appears to have a nice geometric meaning in certain cases.  For example, when there is a line passing through at least three points and algorithms similar to the one outlined in Section \ref{sec:HilbFn} may be used to find the Hilbert function of $I^{(m)}$,  $\lim_{m\rightarrow \infty} \frac{\text{reg}(I^{(m)})}{m}$ is equal to the maximum number of points lying on a single line.

\begin{question}
\label{question:shape}
What features does a point configuration possess when the boundary of $P$ consists of a fixed number of line segments?
\end{question}

To describe one potential answer to Question \ref{question:shape}, we will distinguish between different `types' of points within a configuration.  A curve of degree $d$ \textit{defines} a point configuration if at least ${d+2 \choose 2}$ points in the configuration lie on the curve.  If $N$ points of the configuration lie on such a curve, we will denote this curve by $C_{d,N}$.  For example, in Configuration $H$ shown in Figure \ref{fig:6HArrangement}, there are two curves defining the point configuration.  Since they are both lines containing three points, the set of curves defining the configuration is denoted $\{ C_{1,3}, C_{1,3} \}$.  

Each point within a configuration may then be associated with the set of curves defining the configuration that pass through that point.  For example, in Figure \ref{fig:6HArrangement}, points 2, 3, 4, and 5 correspond to the set $\{ C_{1,3}\}$, point 1 corresponds to the set $\{ C_{1,3}, C_{1,3}\}$, and point $6$ corresponds to the empty set.  We will call such sets the \textit{incidence type} of a point.  Thus, Configuration $H$ has three distinct incidence types.  Configuration $F$ also has three distinct incidence types:  there are two points of incidence type $\{ C_{1,3} \}$, three of type $\{ C_{1,4} \}$, and one of type $\{ C_{1,3}, C_{1,4} \}$.  Configuration $I$ has two distinct incidence types.

\begin{observation}
Suppose that $I$ is the ideal corresponding to one of the following subsets of $\mathbb{P}^2$:  a point configuration of at most six points; a point configuration arising from a complete intersection;  a generic set of points;  points on an irreducible conic; a point configuration where all but one point lies on a line; a point configuration where all but two points lies on a line and no other line passes through three points; or a star point configuration.  Then the number of line segments forming the boundary of the limiting shape of $\{ \text{gin}(I^{(m)})\}_m$ is equal to the number of distinct incidence types of the points in the corresponding point configuration (\cite{Mayes12c}, \cite{Mayes12d}, \cite{Mayes12e}, \cite{Mayes12a}).
\end{observation}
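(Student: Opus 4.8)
The plan is to verify the claim one family at a time, since it is an assertion about a finite list of configuration families rather than a single uniform statement; in each case the limiting shape $P$ is already known explicitly, so the proof reduces to comparing two numbers: the number of line segments bounding $P$ away from the coordinate axes, and the number of distinct incidence types among the points.

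First I would dispose of the configurations of at most six points. Here the limiting shapes are exactly the eleven regions recorded in Figures~\ref{fig:AthroughF} and \ref{fig:GthroughK} by Theorem~\ref{thm:mainthm}, so the number of bounding segments can be read off each picture. On the other side, for each of the types A through K I would list the curves $C_{d,N}$ defining the configuration, assign to each of the six points the subset of those curves passing through it, and count the resulting incidence types. For instance, in configuration H the defining set is $\{C_{1,3},C_{1,3}\}$, the incidence types are $\emptyset$, $\{C_{1,3}\}$ and $\{C_{1,3},C_{1,3}\}$, and the boundary of $P$ computed in Step~4 of the proof of the main theorem consists of three segments, so the two counts agree; running through the remaining ten types the same way settles this part.

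Next I would handle the other six families --- complete intersections, generic points, points on an irreducible conic, all but one point on a line, all but two points on a line with no further line through three of the points, and star configurations --- by invoking the explicit computations of $P$ in \cite{Mayes12c}, \cite{Mayes12d}, \cite{Mayes12e} and \cite{Mayes12a}. In each of these the vertices of $P$ are given by a short closed-form list depending only on the discrete data of the configuration, so the number of bounding segments is immediate, and I would match it against the incidence-type count: a generic configuration has no defining curve, hence the single type $\emptyset$ and a single bounding segment; all but one point on a line gives the defining curve $C_{1,r-1}$, hence the two types $\{C_{1,r-1}\}$ and $\emptyset$ and a two-segment boundary; and similarly for the complete intersection, conic, all-but-two, and star cases.

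The main obstacle is conceptual rather than computational: there is no known configuration-independent reason why the two counts should coincide, which is exactly why the observation is stated only for this list of families. The genuine content sits upstream, in the cited papers, where the vertices of $P$ are produced --- via the Guardo--Harbourne method of Section~\ref{sec:HilbFn} or its variants --- by tracking the values of $d/m$ at which the negative part of the Zariski decomposition of $dE_0 - m(E_1+\cdots+E_r)$ changes; it is there that the incidence structure of the points governs the number of ``phases'', and hence the number of vertices of $P$. Given those inputs, the comparison with the incidence-type count in each family is routine, and assembling the cases completes the argument.
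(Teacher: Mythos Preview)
Your proposal is correct and matches the paper's own treatment: the statement is labeled an \emph{Observation} and is not given a proof in the paper at all --- the paper simply cites \cite{Mayes12c}, \cite{Mayes12d}, \cite{Mayes12e}, \cite{Mayes12a} for the explicit limiting shapes and leaves the case-by-case comparison implicit. Your outline of running through each family, reading off the number of boundary segments from the known $P$, and matching it against the incidence-type count is exactly the intended verification, just spelled out more fully than the paper bothers to do.
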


While we do not have enough evidence to claim that the answer to Question \ref{question:shape} is always given by the number of incidence types in a configuration, it is interesting to note that it holds for all of the cases that have been studied up to this point.  This provides further evidence that our asymptotic viewpoint reveals information that cannot be seen by looking at the Hilbert functions of individual fat point ideals $I^{(m)}$.

\begin{question}
Is there a geometric interpretation for the coordinates of the `crux points' lying on the intersection of the line segments defining the boundary of $P$?
\end{question}

The answer to this final question seems mysterious; it is likely that many more configurations will need to be studied to formulate a reasonable conjecture to answer this question.

\bibliography{SixPointBib}
\bibliographystyle{amsalpha}
\nocite{*}

\end{document}